\documentclass[10pt,a4paper]{article}
\usepackage[a4paper]{geometry}
\usepackage{amssymb,latexsym,amsmath,amsfonts,amsthm}
\usepackage{graphicx}

\usepackage{epsfig}

\newcommand{\er}{\mathbb{R}}
\newcommand{\cee}{\mathbb{C}}

\newcommand{\zet}{\mathbb{Z}}

\newcommand{\bol}{\hfill\square\\}

\newcommand{\til}{\tilde}
\renewcommand{\Re}{\mathrm{Re}\,}
\renewcommand{\Im}{\mathrm{Im}\,}

\newcommand{\err}{\mathcal{R}}
\newcommand{\ud}{\,\mathrm{d}}

\newtheorem{theorem}{Theorem}[section]
\newtheorem{lemma}[theorem]{Lemma}
\newtheorem{proposition}[theorem]{Proposition}

\newtheorem{corollary}[theorem]{Corollary}

\newtheorem{bvp}[theorem]{Boundary value problem}

\theoremstyle{definition}

\theoremstyle{remark}

\newtheorem{remark}[theorem]{Remark}

\newtheorem{question}[theorem]{Question}

\numberwithin{equation}{section}

\hyphenation{pa-ra-me-tri-za-tion}

\title{A family of Nikishin systems with periodic recurrence coefficients}

\date{\today}

\author{Steven Delvaux\footnotemark[1] , Abey L\'opez\footnotemark[1], Guillermo L\'{o}pez Lagomasino\footnotemark[2]}

\begin{document}

\maketitle
\renewcommand{\thefootnote}{\fnsymbol{footnote}}
\footnotetext[1]{Department of Mathematics, University of Leuven (KU Leuven), Celestijnenlaan 200B, B-3001 Leuven,
Belgium. email: \{steven.delvaux, abey.lopezgarcia\}\symbol{'100}wis.kuleuven.be.} \footnotetext[2]{Departamento de
Matem\'{a}ticas, Universidad Carlos III de Madrid, Avda.~Universidad 30, 28911 Legan\'{e}s, Madrid, Spain. email:
lago\symbol{'100}math.uc3m.es.\\ S.D. and A.L. are Postdoctoral Fellows of the Fund for Scientific Research-Flanders
(FWO), Belgium. G.L.L. is partially supported by research grant MTM 2009-12740-C03-01 of Ministerio de Ciencia e
Innovaci\'{o}n, Spain.}

\begin{abstract}
Suppose we have a Nikishin system of $p$ measures with the $k$th generating measure of the Nikishin system supported on
an interval $\Delta_k\subset\er$ with $\Delta_k\cap\Delta_{k+1}=\emptyset$ for all $k$. It is well known that the
corresponding staircase sequence of multiple orthogonal polynomials satisfies a $(p+2)$-term recurrence relation whose
recurrence coefficients, under appropriate assumptions on the generating measures, have periodic limits of period $p$.
(The limit values depend only on the positions of the intervals $\Delta_k$.) Taking these periodic limit values as the
coefficients of a new $(p+2)$-term recurrence relation, we construct a canonical sequence of monic polynomials
$\{P_{n}\}_{n=0}^{\infty}$, the so-called \emph{Chebyshev-Nikishin polynomials}. We show that the polynomials $P_{n}$
themselves form a sequence of multiple orthogonal polynomials with respect to some Nikishin system of measures, with
the $k$th generating measure being absolutely continuous on $\Delta_{k}$. In this way we generalize a result of the
third author and Rocha \cite{LopRoc} for the case $p=2$. The proof uses the connection with block Toeplitz matrices,
and with a certain Riemann surface of genus zero. We also obtain strong asymptotics and an exact Widom-type formula for
the second kind functions of the Nikishin system for $\{P_{n}\}_{n=0}^{\infty}$.\smallskip

\textbf{Keywords:} Multiple orthogonal polynomial, Nikishin system, block Toeplitz matrix, Hermite-Pad\'{e}
approximant, strong asymptotics, ratio asymptotics.

\textbf{2010 AMS classification:} Primary 42C05; Secondary 41A21.
\end{abstract}

\maketitle

\section{Introduction and statement of results}
\label{section:intro}

\subsection{Nikishin system}

Let $p\in\mathbb Z_{>0}$ and let $\Delta_{1},\ldots,\Delta_p\subset\er$ be compact intervals such that
$$ \Delta_k\cap\Delta_{k+1}=\emptyset,\qquad k=1,\ldots,p-1.
$$
Assume that for each $k\in\{1,\ldots,p\}$, $\sigma_k$ is a finite positive measure supported on $\Delta_k$ with density
$\sigma_{k}'(x)>0$ for a.e. $x\in\Delta_{k}$. We denote with
\begin{equation}\label{Niksystem}
\mathcal{M}=(\mu_{1},\ldots,\mu_{p})=\mathcal{N}(\sigma_{1},\ldots,\sigma_{p})
\end{equation}
the Nikishin system generated by the measures $\sigma_k$. The construction of such a system is based on the following
``product operation" for measures; given measures $\sigma_{\alpha}$, $\sigma_{\beta}$, supported on disjoint compact
intervals on $\mathbb{R}$, set
\[
\ud \langle \sigma_{\alpha},\sigma_{\beta} \rangle(x):=\int\frac{\ud\sigma_{\beta}(t)}{x-t}\ud\sigma_{\alpha}(x).
\]
This defines a new measure whose support coincides with that of $\sigma_{\alpha}$. The system $\mathcal{M}$ in
\eqref{Niksystem} is then defined as follows (the notation was introduced in \cite{GRS}):
\[
\mu_{1}:=\sigma_{1},\quad \mu_{2}:=\langle\sigma_{1},\sigma_{2}\rangle, \quad \mu_{3}:=\langle \sigma_{1}, \sigma_{2},
\sigma_{3}\rangle=\langle\sigma_{1},\langle \sigma_{2}, \sigma_{3}\rangle\rangle,\quad \ldots, \quad
\mu_{p}:=\langle\sigma_{1},\langle \sigma_{2},\ldots,\sigma_{p}\rangle\rangle.
\]
Thus the measures $\mu_1,\ldots,\mu_p$ are all of fixed sign and supported on $\Delta_1$. Nikishin systems were
introduced in \cite{Nik}. These systems (and variations of them) have attracted an ever increasing interest during the
last decades, due to their many theoretical and practical applications, see e.g., \cite{Apt,AptKalLopRoc,AKS,
AptLopRoc,AL,BustaLop,CousAssche,DelLop, DrSt,FL,GRS,Kuij,KMW,LopezGarcia,LopRoc,Nik,NS}.

For $n\in\zet_{\geq 0}$ we define the multi-index
\begin{equation}\label{multi:index}
\mathbf{n}:=(n_1,\ldots,n_p):=(\underset{\textrm{$k$ times}}{\underbrace{m+1,\ldots,m+1}},\underset{\textrm{$p-k$
times}}{\underbrace{m,\ldots,m}})\in\mathbb Z^p_{\geq 0},
\end{equation}
where $m\in\zet_{\geq 0}$ and $k\in\{0,1,\ldots,p-1\}$ are such that $n=mp+k$. Note that we have
$|\mathbf{n}|:=n_{1}+n_{2}+\cdots+n_{p}=n$.

Let $(Q_{n})_{n=0}^{\infty}$ be the diagonal sequence of multiple orthogonal polynomials associated with the Nikishin
system $\mathcal{M}$. That is, the polynomial $Q_{n}$ is the monic polynomial of degree $n$ that satisfies for each
$k=1,\ldots,p$ the orthogonality conditions
\begin{equation}\label{Qn:ortho}
\int_{\Delta_{1}}Q_{n}(x)\, x^{l} \ud\mu_{k}(x)=0,\qquad l= 0,1,\ldots,n_{k}-1,
\end{equation}
where $n_{k}$ is the $k$-th component of the multi-index $\mathbf{n}$ in \eqref{multi:index}. The existence and
uniqueness of the sequence $(Q_{n})_{n=0}^{\infty}$ follows from the (weak) perfectness of Nikishin systems, see e.g.\
\cite{NS}. (See also \cite{FL} where it is shown that Nikishin systems are perfect.) It is well known that the
polynomials $Q_n$ satisfy a $(p+2)$-term recurrence relation
\begin{equation}\label{recrelQn}
z Q_{n}(z)=Q_{n+1}(z)+a_{n,n} Q_{n}(z)+a_{n,n-1}Q_{n-1}(z)+\cdots+a_{n,n-p}\,Q_{n-p}(z),
\end{equation}
with initial conditions
\[
Q_{-p}\equiv Q_{-p+1}\equiv \cdots\equiv Q_{-1}\equiv 0, \quad Q_{0}\equiv 1,
\]
and with $a_{n,n-p}\neq 0$ for all $n\geq p$. The recurrence coefficients $a_{n,m}$ in \eqref{recrelQn} can be viewed
as the entries of the banded Hessenberg operator
\begin{equation}\label{matrixA}
A=\begin{pmatrix} a_{0,0} & 1  & &    \\ a_{1,0} & a_{1,1} & 1 &   \\ \vdots  & \ddots & \ddots &  \ddots   \\ a_{p,0}
& a_{p,1} & \ddots & \ddots   \\
 & a_{p+1,1} & a_{p+1,2} & \ddots  \\
 &  & \ddots & \ddots
\end{pmatrix}.
\end{equation}
Thus $A$ is a semi-infinite matrix with one superdiagonal, filled with $1$'s, and with $p$ subdiagonals. All the other
entries of $A$ are equal to zero.

\subsection{Chebyshev-Nikishin polynomials}

It was proved in \cite{AptLopRoc} that the following ratio asymptotic formulas hold:
\begin{equation}\label{ratioasymp}
\lim_{m\rightarrow\infty}\frac{Q_{mp+k}(z)}{Q_{mp+k-1}(z)}=:F_{k}(z),\qquad k=1\ldots,p,
\end{equation}
where the limits are valid uniformly on compact subsets of $\mathbb{C}\setminus \Delta_{1}$. The limiting ratios
$F_k(z)$ will be described in more detail in Section~\ref{section:ratioasy}.

The ratio asymptotics \eqref{ratioasymp} imply, see \cite{AptKalLopRoc}, that for each fixed $(i,j)$, $0\leq j\leq
p-1$, $j\leq i\leq j+p$ the following limit exists:
\begin{equation}\label{def:limitreccoeff}
\lim_{m\rightarrow\infty} a_{mp+i,mp+j}=:b_{i,j}.
\end{equation}
For all $m \in \mathbb{Z}_{\geq 0}$, we take $b_{mp+i,mp+j}:= b_{i,j}$. This means that the banded Hessenberg operator
$A$ in \eqref{matrixA} is a compact perturbation of the tridiagonal block Toeplitz operator
\begin{equation}\label{defH}
T=\begin{pmatrix} B_{0} & B_{-1}\\ B_{1} & B_{0} & B_{-1} \\
 & B_{1} & B_{0} & B_{-1} \\
 & & B_{1} & B_{0} & \ddots \\
 & & & \ddots & \ddots
\end{pmatrix},
\end{equation}
where the blocks $B_{k}$ are of size $p\times p$ and given by
\begin{equation}\label{blocks:B01}
B_{0}=\begin{pmatrix} b_{0,0} & 1 \\ b_{1,0} & b_{1,1} & \ddots \\ \vdots & \vdots & \ddots & 1 \\ b_{p-1,0} &
b_{p-1,1} & \ldots &  b_{p-1,p-1}
\end{pmatrix},\quad
B_{1}=\begin{pmatrix} b_{p,0} & b_{p,1} & \ldots & b_{p,p-1} \\
 & b_{p+1,1} & \ldots & b_{p+1,p-1}\\
 &  & \ddots & \vdots \\
 & & & b_{2p-1,p-1}
\end{pmatrix},
\end{equation}
\begin{equation}\label{blocks:Bminus1} B_{-1}=\begin{pmatrix}
0 & 0 & \ldots & 0 \\ \vdots & \vdots & & \vdots \\ 0 & 0 & \ldots & 0 \\ 1 & 0 & \ldots & 0 \\
\end{pmatrix}.
\end{equation}
We view the diagonals of $T$ as infinite periodic sequences with period $p$.

\begin{theorem}\label{theorem:relations:bij}
The following relations hold for all $k,j$ and all $l \in\{1,\ldots,p-1\}$:
\begin{align}
& b_{k,k-1}=b_{j,j-1}=:\beta,\label{rel:b:1}\\ & b_{k+1,k- l}-b_{k,k- l-1}= b_{k,k- l}(b_{k- l,k- l} -b_{k ,k }).
\label{rel:b:2}
\end{align}
\end{theorem}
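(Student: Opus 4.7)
My plan is to derive both relations from the limit form of the recurrence (\ref{recrelQn}). Dividing the recurrence by $Q_n(z)$ and writing $Q_{n-l}(z)/Q_n(z) = \prod_{j=1}^{l} Q_{n-j}(z)/Q_{n-j+1}(z)$, I set $n = mp+k$ and pass to the limit $m\to\infty$ via the ratio asymptotics (\ref{ratioasymp}) and the coefficient convergence (\ref{def:limitreccoeff}). With the periodic extensions $F_{k+p}=F_k$ and $b_{i+p,j+p}=b_{i,j}$, this yields, for every integer $k$, the master algebraic identity
\begin{equation}\label{mast}
z\;=\;F_{k+1}(z)\;+\;b_{k,k}\;+\;\sum_{l=1}^{p}\frac{b_{k,k-l}}{F_k(z)\,F_{k-1}(z)\cdots F_{k-l+1}(z)}.
\end{equation}
Matching the $z^{0}$ and $z^{-1}$ terms of the asymptotic expansion at infinity in (\ref{mast}) gives $F_k(z)= z - b_{k-1,k-1} - b_{k-1,k-2}/z + O(1/z^2)$.

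Since the left-hand side of (\ref{mast}) is independent of $k$, I subtract (\ref{mast}) at $k+1$ from the same identity at $k$ and multiply the difference by the common denominator $P(z):=F_1(z)\cdots F_p(z)$, which by periodicity equals $F_{k+1}F_k\cdots F_{k-p+2}$ regardless of the starting index. This produces a polynomial identity in the branches $F_j$ of the form
\[
P\bigl(F_{k+1}-F_{k+2}\bigr)+P(b_{k,k}-b_{k+1,k+1})+\sum_{l=1}^{p}\frac{\bigl(b_{k,k-l}F_{k+1}-b_{k+1,k+1-l}F_{k+1-l}\bigr)\,P}{F_{k+1}F_k\cdots F_{k-l+1}}=0,
\]
whose monomials in the $F_j$'s are of successively lower degree as $l$ increases (the $l$-th summand contributes a term of leading degree $z^{p-l}$). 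To extract (\ref{rel:b:1}), I isolate the $l=1$ summand and, using the known expansion $F_k = z - b_{k-1,k-1} + O(1/z)$ together with the preceding terms, single out the coefficient that survives the cancellation of the leading monomials. The surviving equation reduces to $b_{k+1,k}=b_{k,k-1}$, and iteration over $k$ yields (\ref{rel:b:1}). For (\ref{rel:b:2}), once (\ref{rel:b:1}) is established, I apply an analogous argument at the $l$-th summand: in the numerator $b_{k,k-l}F_{k+1}-b_{k+1,k+1-l}F_{k+1-l}$ the substitution $F_{k+1}=z-b_{k,k}+\cdots$ and $F_{k+1-l}=z-b_{k-l,k-l}+\cdots$ produces, after the linear-in-$z$ terms cancel against the $(l+1)$-st summand, precisely the bilinear combination $b_{k,k-l}(b_{k-l,k-l}-b_{k,k})$, while the scalar discrepancy $b_{k+1,k-l}-b_{k,k-l-1}$ emerges from the constant terms.

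\textbf{Main obstacle.} The principal difficulty is that comparing the leading powers of $z$ in the difference identity produces only tautologies: the coefficients of $z^p$, $z^{p-1}$, and in fact several further orders, vanish automatically once one substitutes $F_k=z-b_{k-1,k-1}+\cdots$ (these identities are nothing but consistency relations for the expansion of the $F_k$'s). The nontrivial constraints (\ref{rel:b:1}) and (\ref{rel:b:2}) emerge only after one has systematically cancelled the contributions of all the summands except the one that uniquely carries the desired coefficient, and, for (\ref{rel:b:2}), after (\ref{rel:b:1}) has been used to simplify intermediate expressions. A secondary source of difficulty is the careful bookkeeping of the periodic indexing $b_{i+p,j+p}=b_{i,j}$ throughout the derivation.
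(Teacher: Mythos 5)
Your setup matches the paper's: you derive the master identity (your (\ref{mast}) is the paper's limit form of the recurrence, i.e.\ \eqref{eq:relFb}), you use that all $F_k$ differ by constants (so $F_k = \mathcal{F} - b_{k-1,k-1}$ for a single function $\mathcal{F}$), you form the difference of the identity at $k$ and $k+1$, and you clear denominators to obtain a polynomial identity of degree $p-1$. This is precisely the paper's \eqref{eq:fund1}. Where your proposal stops short is in the extraction step, which is the entire substance of the proof. You propose to ``single out the coefficient that survives the cancellation of the leading monomials,'' but you never specify the mechanism, and in the ``Main obstacle'' paragraph you concede that you do not have one. The paper's device is concrete: since $\mathcal{F}$ extends to a conformal bijection of the genus-zero surface $\mathcal{R}$ onto $\overline{\cee}$, and since the $p$ values $b_{0,0},\ldots,b_{p-1,p-1}$ are pairwise distinct (otherwise two of the $F_j$ would coincide), there exist $p$ distinct points $\zeta_j\in\mathcal{R}$ with $\mathcal{F}(\zeta_j)=b_{j,j}$. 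Evaluating the polynomial identity at $\zeta_{k-p+1},\zeta_{k-p+2},\ldots$ successively kills all but a few terms each time, and after dividing out a common nonzero factor $b_{k-l,k-l}-b_{k-l+1,k-l+1}$ one obtains \eqref{rel:b:2} by downward induction on $l$. Your proposal never introduces these evaluation points, never records that the $b_{j,j}$ are pairwise distinct (which is needed to divide out the common factors), and so the inductive cancellation you describe remains unjustified.

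A secondary inaccuracy: your claim that ``the coefficients of $z^p$, $z^{p-1}$, and in fact several further orders, vanish automatically'' is wrong for $z^{p-1}$. The $z^p$ coefficient vanishes because $F_{k+1}-F_{k+2}=-(b_{k,k}-b_{k+1,k+1})$ forces the two leading terms to cancel identically, but the $z^{p-1}$ (equivalently, $\mathcal{F}^{p-1}$) coefficient of the cleared identity is $b_{k,k-1}-b_{k+1,k}$, which is exactly \eqref{rel:b:1} and is by no means automatic. In fact, once one observes that the identity is a polynomial of degree $p-1$ in the single variable $\mathcal{F}$, comparing coefficients of $\mathcal{F}^{p-1},\ldots,\mathcal{F}^{0}$ is a legitimate alternative to the paper's evaluation-at-points argument (the two are related by a triangular change of basis $\mathcal{F}^m = z^m + \text{lower order}$), and would in principle yield the same relations after the same inductive simplification. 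But your write-up does not carry this out, and without the Riemann-surface interpretation it is not even clear a priori that the Laurent-coefficient comparison captures the full polynomial content of the identity. In short: right starting point, missing extraction mechanism, and a misdiagnosed obstacle.
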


Theorem~\ref{theorem:relations:bij} generalizes a result in \cite{LopRoc} for the case $p=2$. It will be proved in
Section~\ref{section:rellimitcoeff}. However, Theorem~\ref{theorem:relations:bij} will not be used in the rest of the
paper.

We define the \emph{Chebyshev-Nikishin polynomials} \cite{LopRoc} as the monic polynomials $(P_{n})_{n=0}^{\infty}$
associated with the operator $T$ in \eqref{defH}; that is, \begin{equation}\label{Pn:def} P_n(z) = \det(zI_n-T_n),
\end{equation}
where $I_n$ denotes the identity matrix of size $n$, and $T_n$ is the $n\times n$ principal submatrix of $T$. The
polynomials $P_n$ satisfy the recurrence relation
\[ zP_n(z) = P_{n+1}(z) + b_{n,n}P_n(z) + \cdots + b_{n,n-p} P_{n-p}(z)
\]
with the initial conditions
\[
P_{-p}\equiv P_{-p+1}\equiv \cdots\equiv P_{-1}\equiv 0, \quad P_{0}\equiv 1.
\]
Note that for $p=1$ and $\Delta_{1}=[-2,2]$, the polynomials $P_{n}$ reduce to the classical Chebyshev polynomials of
the second kind for this interval. This explains the name Chebyshev-Nikishin polynomials \cite{LopRoc}.

\subsection{Nikishin system generated by the Chebyshev-Nikishin polynomials}

It is easy to see that there exist positive measures $(\nu_1,\ldots,\nu_p)$ supported on $\Delta_1$ so that
\begin{equation}\label{cauchy1} \frac{1}{F_{k}(z)} = \int_{\Delta_1} \frac{1}{z-t} \ud\nu_k(t),\qquad k=1,\ldots,p.
\end{equation}
This is a consequence of the ratio asymptotics in \eqref{ratioasymp} and the interlacing of the zeros of $Q_{mp+k}(z)$
and $Q_{mp+k-1}(z)$ \cite{AptLopRoc}. Alternatively, the integral representation \eqref{cauchy1} can be obtained from
the analytic properties of $\frac{1}{F_{k}(z)}$ using \cite[Theorem A.6]{KN} (see \eqref{ratio:asy:psi} below) which
ensures that the measures $\nu_k, k=1,\ldots,p,$ are absolutely continuous with respect to the Lebesgue measure.

The main goal of this paper is to establish the following two theorems on the Chebyshev-Nikishin polynomials $P_n(z)$.

\begin{theorem}\label{theorem:main:ortho}
Let $(\nu_1,\ldots,\nu_p)$ be the measures on $\Delta_1$ defined in \eqref{cauchy1}. Then the Chebyshev-Nikishin
polynomials $P_n$ satisfy the orthogonality conditions
\begin{equation}\label{Pn:mop}
\int_{\Delta_1} P_{n}(x)\, x^{l} \ud\nu_k(x)=0,\qquad l=0,\ldots,n_{k}-1,\quad k=1,\ldots,p,
\end{equation}
where $n_{k}$ is the $k$-th component of the multi-index $\mathbf{n}$ in \eqref{multi:index}.
\end{theorem}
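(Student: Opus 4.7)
The orthogonality conditions \eqref{Pn:mop} are equivalent, for each $k = 1, \ldots, p$, to the decay
$$
R_n^{(k)}(z) := \int_{\Delta_1}\frac{P_n(x)}{z-x}\,\ud\nu_k(x) = O(z^{-n_k-1}), \qquad z \to \infty.
$$
Using \eqref{cauchy1}, this remainder can be written as
$$
\frac{P_n(z)}{F_k(z)} = W_n^{(k)}(z) + R_n^{(k)}(z), \qquad W_n^{(k)}(z) := \int_{\Delta_1} \frac{P_n(z)-P_n(x)}{z-x}\,\ud\nu_k(x),
$$
where $W_n^{(k)}$ is a polynomial of degree $n-1$ in $z$. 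Since $F_k(z) = z + O(1)$ at infinity (because $F_k$ is the limit of the ratio of two consecutive monic polynomials whose degrees differ by one), $W_n^{(k)}$ is exactly the polynomial part of $P_n/F_k$ at $\infty$, and the orthogonality reduces to showing that the non-polynomial remainder starts at order $z^{-n_k-1}$.

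\textbf{Exact representation via the Riemann surface.} The polynomials $P_n$ satisfy a periodic $(p+2)$-term recurrence attached to the block Toeplitz operator $T$ in \eqref{defH}. Applying Floquet theory to this recurrence, $P_n(z)$ admits an exact representation as a linear combination of $p+1$ basic solutions whose period-one multipliers are the $p+1$ branches $F_0, F_1, \ldots, F_p$ of a single algebraic function on the Riemann surface $\mathcal{R}$ of genus zero announced in the abstract; here $F_1, \ldots, F_p$ are the ratio-asymptotic functions \eqref{ratioasymp} and $F_0$ is the remaining ``physical'' branch. Because $\mathcal{R}$ has genus zero, it carries a rational uniformization, so both $P_n$ and $P_n/F_k$ admit explicit closed-form expressions as functions on $\mathcal{R}$. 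This is the content of the ``exact Widom-type formula'' announced in the abstract.

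\textbf{Decay extraction.} Given these exact expressions, the lift of $P_n/F_k$ to the sheet of $\mathcal{R}$ corresponding to the branch $F_k$ has a Laurent expansion at the preimage of $z = \infty$ whose polynomial part must coincide with $W_n^{(k)}$ (both being determined by the principal behavior at infinity on the main sheet), so its decaying tail equals $R_n^{(k)}$. The order of decay is governed by the leading behaviors of the $F_j$'s at infinity on the various sheets, together with the cyclic structure of the multi-index \eqref{multi:index}: writing $n = mp + k_0$, we have $n_k = m+1$ for $k \le k_0$ and $n_k = m$ for $k > k_0$, and this count of ``incremented'' components is precisely the power of the decaying branches that appears in $P_n/F_k$ on the $k$-th sheet, producing the required order $z^{-n_k-1}$.

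\textbf{Main obstacle.} The principal difficulty is upgrading the Widom-type representation from asymptotic to \emph{exact}, and then extracting from it not merely the leading-order behavior of $P_n$ but the full polynomial-plus-decay decomposition of $P_n/F_k$ at each preimage of $\infty$ on $\mathcal{R}$. This requires the explicit rational parametrization of the genus-zero surface $\mathcal{R}$ and careful bookkeeping of how the $p+1$ sheets meet at $z = \infty$. A minor subsidiary point is to verify that $F_k$ has no zeros in $\mathbb{C}\setminus\Delta_1$, so that $P_n/F_k$ is meromorphic there with no poles off $\Delta_1$; this follows from the construction of $F_k$ via the ratio asymptotics \eqref{ratioasymp} together with the interlacing of zeros of $Q_{mp+k}$ and $Q_{mp+k-1}$ mentioned after \eqref{cauchy1}.
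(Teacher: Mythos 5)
Your reduction at the start is correct: the orthogonality \eqref{Pn:mop} is equivalent to $R_n^{(k)}(z) = P_n(z)/F_k(z) - W_n^{(k)}(z) = O(z^{-n_k-1})$ at infinity, with $W_n^{(k)}$ the polynomial part of $P_n/F_k$. However, the rest of the proposal has a genuine gap, and it also rests on a misidentification of the objects involved.

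First, the misidentification. The Floquet multipliers of the periodic recurrence are the roots $z_1(x),\ldots,z_{p+1}(x)$ of $f(z,x)=\det F(z,x)=0$; these are \emph{not} the functions $F_0,\ldots,F_p$ or the branches $\psi_0^{(k)},\ldots,\psi_p^{(k)}$ of any single $\psi^{(k)}$. In fact $z_j(x) = 1/\bigl(\psi_{j-1}^{(1)}(x)\cdots\psi_{j-1}^{(p)}(x)\bigr)$ (equations \eqref{ztilde:product}--\eqref{ztilde:product:bis}), a \emph{product of all $p$} functions evaluated on sheet $j-1$, not a branch of one $F_k$. Consequently the Widom formula \eqref{Widom:Pn} expresses $P_n$ in terms of the $z_j$'s, and there is no natural way to ``lift $P_n/F_k$ to the $k$-th sheet of $\mathcal R$'': $P_n$ is a polynomial and $F_k=\psi_0^{(k)}$ is a fixed branch, so $P_n/F_k$ is a single-valued function on $\mathbb C\setminus\Delta_1$ whose analytic continuation across $\Delta_1$ is $P_n/\psi_1^{(k)}$, with a completely different behavior at $\infty$ on the second sheet.

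Second, and decisively, the ``decay extraction'' step is never carried out. Even granting an exact expression for $P_n/F_k$, you must show that all coefficients of $z^{-1},\ldots,z^{-n_k}$ vanish, and that $n_k$ shifts by exactly one between $k\le k_0$ and $k>k_0$ when $n=mp+k_0$. The sentence about ``the count of incremented components being precisely the power of the decaying branches'' asserts the conclusion; it is not an argument. To obtain such an exact formula for something like $P_n/F_k$ with visible order of vanishing, the paper proves a Widom-type formula for the second kind functions $\Psi_{n,l}$ (Proposition~\ref{PropSecondKind1}), but it does so \emph{after} and \emph{using} Theorem~\ref{theorem:main:ortho} (via the boundary-value characterization and \cite{GRS}), so that route as stated is circular.

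The paper's actual proof is quite different and entirely absent from your proposal. It relies on Kaliaguine's theory of Hermite--Pad\'e approximation for banded Hessenberg operators (Section~\ref{section:kalyagin}), applied not to $T$ but to its anti-diagonal reflection $\widetilde T^{(k)}$ (Section~\ref{section:ChebNikpoly}), which turns the leading-principal-minor quotients into the accessible ratios $P_{n-j}/P_n$. This gives exact orthogonality against functions $G_j^{(k)}$ that are linear combinations of $1/(F_k\cdots F_{k-j+1})$ with free constants $c_{i,j-1}$, via \eqref{eq:ortPn}; Proposition~\ref{prop:constants:cij} then chooses those constants using the crucial fact that $F_i-F_j$ is constant on $\mathcal R$ (a Liouville-theorem consequence of the structure of $\psi^{(k)}$), collapsing $G_j^{(k)}$ to a single $1/F_{k-j+1}$; and the residue-class bookkeeping plus contour shrinking finishes the argument. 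The ratio asymptotics \eqref{ratio:asy:PQ} established in Section~\ref{section:ratioasy} are what make the identification of the approximated functions possible. Your proposal identifies a plausible-looking alternative reduction but stops exactly where the hard work begins, and the Floquet/Widom objects you propose to use are not the ones that make that hard work tractable.
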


\begin{theorem} \label{theorem:main:Nikishin}
The measures $(\nu_1,\ldots,\nu_p)$ form a Nikishin system on $(\Delta_1,\ldots,\Delta_p)$. That is, there exist
measures $\rho_k$ supported on $\Delta_k$, $k=1,\ldots,p$, such that
$(\nu_{1},\ldots,\nu_{p})=\mathcal{N}(\rho_{1},\ldots,\rho_{p})$. The measure $\rho_k$ is absolutely continuous on
$\Delta_k$, with its density $\rho_k'(x)$ being non-vanishing and of a fixed sign (either positive or negative) on the
interior of $\Delta_k$.
\end{theorem}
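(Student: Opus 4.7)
The plan is to realize the generators $\rho_k$ as explicit boundary jumps of meromorphic functions on the compact genus-zero Riemann surface $\mathcal{R}$ that arises as the spectral curve of the block Toeplitz operator $T$ in \eqref{defH}, and then to verify the Nikishin product structure via iterated Plemelj--Sokhotski identities.

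The first step is to set up $\mathcal{R}$: take $p+1$ copies of the extended plane $\overline{\mathbb{C}}$, labelled $\Pi_0,\Pi_1,\ldots,\Pi_p$, and glue $\Pi_{k-1}$ to $\Pi_k$ across $\Delta_k$ for $k=1,\ldots,p$. Since $\mathcal{R}$ has genus zero (as announced in the abstract and made explicit in the forthcoming asymptotic section), it admits a rational uniformizer, and the characteristic equation of the symbol of $T$ yields $p+1$ meromorphic branches $\psi_0,\ldots,\psi_p$, each $\psi_j$ living naturally on sheet $\Pi_j$. Matching behaviour at infinity with the leading-order recurrence and exploiting that $Q_{mp+k}/Q_{mp+k-1}$ is governed by the same spectral data as $T$, one identifies $F_k=\psi_{k-1}/\psi_k$ up to normalization.

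Next, I would define $\rho_k$ on $\Delta_k$ via the boundary jump of $\psi_{k-1}/\psi_k$, lifted to the appropriate sheet, across $\Delta_k$. Because the two branches swap under analytic continuation across $\Delta_k$, the jump is purely imaginary on the interior of $\Delta_k$ and produces an absolutely continuous measure whose density is non-vanishing and of a fixed sign, the sign being dictated by the orientation of the sheet gluing and all quantitative properties transparent from the rational parametrization. The Nikishin product identity
\begin{equation*}
\int_{\Delta_1}\frac{\ud\nu_k(t)}{z-t}=\int_{\Delta_1}\frac{\ud\rho_1(t_1)}{z-t_1}\int_{\Delta_2}\frac{\ud\rho_2(t_2)}{t_1-t_2}\cdots\int_{\Delta_k}\frac{\ud\rho_k(t_k)}{t_{k-1}-t_k}
\end{equation*}
is then proved by induction on $k$: at each stage one deforms the contour representation of the left-hand side from a loop around $\Delta_1$ into a chain of loops around $\Delta_1,\ldots,\Delta_k$ on successive sheets of $\mathcal{R}$, and the jumps across the cuts produce precisely $\ud\rho_1,\ldots,\ud\rho_k$. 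At the base step $k=1$ this reduces to $\nu_1=\rho_1$, which matches \eqref{cauchy1} for $k=1$ by construction.

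The main obstacle I anticipate is the correct identification and normalization of the branches $\psi_k$ on $\mathcal{R}$, together with the bookkeeping that determines which sheet carries the lift of $F_k$ whose jump on $\Delta_k$ gives $\rho_k$; this depends sensitively on the conventions fixing the Riemann surface (ordering of sheets, choice of sheet at $\infty$, and sign of the uniformizer). Once those conventions are pinned down, the iterated contour deformation yielding the Nikishin structure is essentially routine Sokhotski--Plemelj algebra, and absolute continuity together with the fixed-sign, non-vanishing character of $\rho_k'$ follow immediately from the explicit genus-zero formulas for $\psi_{k-1}/\psi_k$.
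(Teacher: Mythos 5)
Your overall framework — lift everything to the genus-zero spectral curve $\mathcal{R}$, realize the layers of the Nikishin hierarchy as boundary jumps of meromorphic data on its sheets, and recover the $\langle\cdot,\cdot\rangle$-products by contour manipulations — is genuinely in the spirit of the paper's Section~6. However, there are two concrete gaps that would derail the plan as written.

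First, the identification $F_k=\psi_{k-1}/\psi_k$ "up to normalization" is not what the Riemann surface gives you. In the paper (and in \cite{AptLopRoc}, recalled in \eqref{ratio:asy:psi}) one has a \emph{family} of $p$ meromorphic functions $\psi^{(1)},\ldots,\psi^{(p)}$ on $\mathcal{R}$, each with a pole at $\infty^{(0)}$ and a zero at $\infty^{(k)}$, and $F_k=\psi_0^{(k)}$ is the branch of $\psi^{(k)}$ on sheet $0$. There is no single function $\psi$ with $p+1$ branches whose consecutive ratios give the $F_k$'s. The functional relation that drives the whole argument is instead the Liouville-type identity \eqref{constants:Ki}, $\psi^{(j)}=\psi^{(k)}+K_j-K_k$; it is precisely this additive relation that makes the Plemelj jumps of the different $1/F_j$ proportional on $\Delta_1$, which in turn is what lets the density \emph{ratios} telescope into Cauchy transforms on $\Delta_2$, and so on. Without this ingredient your "contour chain" deformation has no reason to close: for the loop-on-successive-sheets picture to produce $\ud\rho_1,\ldots,\ud\rho_k$, you must already know that the ratio of jump densities across $\Delta_{l-1}$ extends to $\mathcal{H}(\mathbb{C}\setminus\Delta_l)$ with $O(1/z)$ decay — and that is exactly the nontrivial step the paper proves, not a consequence of generic Sokhotski–Plemelj algebra.

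Second, the guessed generator $\rho_k$ as the jump of $\psi_{k-1}/\psi_k$ across $\Delta_k$ does not match the answer. The diagonal measures that actually generate the system (see \eqref{constants:cij:nik} and Remark~\ref{remark:diagonal:nik}) are $\rho_l=\nu_l^{(l)}$ with
\[
\frac{\ud \rho_l(x)}{\ud x} = \frac{m_{l,l}}{2\pi i}\,\frac{\psi^{(l-1)}_0(x)\cdots \psi^{(l-1)}_{l-2}(x)}{\psi^{(l)}_0(x)\cdots \psi^{(l)}_l(x)}\,\bigl(\psi^{(1)}_{l-1,-}(x)-\psi^{(1)}_{l-1,+}(x)\bigr),\qquad x\in\Delta_l,
\]
i.e.\ a product of several branches of two \emph{different} members of the family $\psi^{(j)}$, times a jump of a third. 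A single-ratio ansatz cannot reproduce this. The paper instead works forwards: starting from $F_j^{(1)}=1/F_j$, it forms $F_j^{(l)}:=\bigl((F_j^{(l-1)})_--(F_j^{(l-1)})_+\bigr)/\bigl((F_{l-1}^{(l-1)})_--(F_{l-1}^{(l-1)})_+\bigr)$, uses \eqref{constants:Ki} to show $F_j^{(l)}\in\mathcal{H}(\mathbb{C}\setminus\Delta_l)$ with $O(1/x)$ decay, hence a Cauchy transform of a measure on $\Delta_l$; iterating defines $\rho_l$ and proves the Nikishin structure at once. The sign/nonvanishing statement then comes from the explicit formula together with the injectivity of $\psi^{(1)}$ on $\mathcal{R}$ (which rules out zeros of $\psi^{(1)}_{l-1,-}-\psi^{(1)}_{l-1,+}$ in the interior of $\Delta_l$); "the sign being dictated by the orientation of the sheet gluing" is not enough on its own.

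So: right arena, but wrong uniformizing data, and the key analytic step (density ratio extends to a Cauchy transform on the next interval) is asserted rather than supplied.
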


Theorems~\ref{theorem:main:ortho} and \ref{theorem:main:Nikishin} will be proved in
Sections~\ref{section:proof:main:ortho} and~\ref{section:proof:main:Nik} respectively. For $p=2$ these theorems reduce
to a result in \cite{LopRoc}. Our proofs will be markedly different from the ones in \cite{LopRoc}. A generalization of
Theorem~\ref{theorem:main:ortho} to other systems of multi-indices $\mathbf n$ will be given in
Section~\ref{section:concluding:remark}.

\subsection{About the proof of Theorem~\ref{theorem:main:ortho}: ratio asymptotics}
\label{subsection:aboutproof}

Let
\begin{equation}\label{symbol:Toeplitz}
F(z,x):=\frac{1}{z}\, B_{-1}+B_{0}+z B_{1}-x I_{p}
\end{equation}
be the block Toeplitz symbol associated with $T$ in \eqref{defH}, and let
\begin{equation}\label{algebraic:equation}
f(z,x):=\det F(z,x).\end{equation} The algebraic equation $f(z,x)=0$ has $p+1$ roots $z_{1}(x),\ldots,z_{p+1}(x)$
(counting multiplicities) that we label so that
\begin{equation}\label{ordering:roots}
|z_{1}(x)|\leq |z_{2}(x)|\leq \cdots\leq |z_{p+1}(x)|,\qquad x\in\mathbb{C}.
\end{equation}
We introduce the sets
\begin{equation}\label{defGammas}
\Gamma_{k}:=\{x\in\mathbb{C}: |z_{k}(x)|=|z_{k+1}(x)|\},\qquad k=1,\ldots,p.
\end{equation}

In the proof of Theorem~\ref{theorem:main:ortho}, the following result will play a pivotal role.

\begin{proposition}\label{prop:QP:samelimit}
We have that
\begin{equation}\label{GammaDelta}
\Gamma_k=\Delta_k,\qquad k=1,\ldots,p,
\end{equation}
and \begin{equation}\label{ratio:asy:PQ} \lim_{m\rightarrow\infty}\frac{P_{mp+k}(x)}{P_{mp+k-1}(x)}=
\lim_{m\rightarrow\infty}\frac{Q_{mp+k}(x)}{Q_{mp+k-1}(x)}=: F_{k}(x),\qquad k=1,\ldots,p,
\end{equation}
uniformly on each compact subset of  $\cee\setminus(\Delta_1\cup\mathcal A)$, with $\mathcal A$ a set of finite
cardinality. Here we use the notation in \eqref{ratioasymp}.
\end{proposition}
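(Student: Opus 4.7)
The plan is to exploit that each limit $F_k$ admits two descriptions: as a ratio limit of the multiple orthogonal polynomials $Q_n$, and as a specific branch of the algebraic curve $f(z,x)=0$ coming from the block-Toeplitz symbol of $T$. I would first transfer the known ratio asymptotics of $(Q_n)$ to $(P_n)$, and then use the analytic structure of the $F_k$ to pin down $\Gamma_k = \Delta_k$.

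For the transfer of ratio asymptotics, I would rewrite both $(p+2)$-term recurrences as first-order vector recurrences $\mathbf{v}_{n+1} = M_n(x)\mathbf{v}_n$ on $\cee^{p+1}$. Since the coefficients of the $(Q_n)$-recurrence converge to those of the $(P_n)$-recurrence along $p$-periodic diagonals by \eqref{def:limitreccoeff}, a Poincar\'{e}--Perron type analysis for banded Hessenberg operators with asymptotically periodic coefficients (in the spirit of \cite{AptKalLopRoc}) shows that $P_{mp+k}/P_{mp+k-1}$ and $Q_{mp+k}/Q_{mp+k-1}$ share the same limits $F_k(x)$, uniformly on compacts of their common domain of analyticity, up to a finite exceptional set $\mathcal{A}$ arising from accidental zeros of $P_{mp+k-1}$ or from the discrete spectrum of the compact perturbation $A-T$.

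To establish $\Gamma_k = \Delta_k$, I would examine the algebraic relation between the $F_k$ and the branches of $f(z,x)=0$. Substituting the ansatz $P_{(m+1)p+k-1}/P_{mp+k-1} \to \prod_{j=1}^{p} F_j(x)$ into the periodic recurrence encoded by the symbol $F(z,x)$ forces $\prod_{j=1}^{p} F_j(x)$ to be a root of $f(z,x)=0$; tracing the dominant behavior as $x\to\infty$ identifies it with the largest-modulus root $z_{p+1}(x)$, and shifting the base point of the period then identifies each $F_k$ individually with a specific branch $z_{\sigma(k)}(x)$. Because each $F_k$ is analytic on $\cee \setminus \Delta_1$ (by the Cauchy representation \eqref{cauchy1}) while the branches $z_k(x)$ are globally defined on the genus-zero Riemann surface of $f(z,x)=0$, the ordering $|z_k(x)| \leq |z_{k+1}(x)|$ can change only on the projected cuts of this surface. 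A matching with the Riemann surface underlying the Nikishin description of the $F_k$ developed in Section~\ref{section:ratioasy} then identifies these cuts, intersection by intersection, with the intervals $\Delta_k$.

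The main obstacle is the second part: making the match between the algebraic set $\Gamma_k$ (defined purely from the symbol $F(z,x)$) and the geometric interval $\Delta_k$ (supporting the generator $\sigma_k$) precise and labeling-correct. This rests on showing that the genus-zero Riemann surface of $f(z,x)=0$ coincides with the one built from the Nikishin system and that the modulus ordering of branches correctly matches the combinatorial labeling of sheets induced by the diagonal entries of $B_1$ and the rank-one structure of $B_{-1}$.
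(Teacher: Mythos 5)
Your overall architecture is sound and tracks the paper's strategy up to a point: both you and the paper express the $F_k$ as branches of the algebraic curve $f(z,x)=0$ via the Nikishin Riemann surface, and both reduce the proposition to identifying these branches with the \emph{modulus-ordered} roots $z_k(x)$ and showing $\Gamma_k=\Delta_k$. For the transfer of ratio asymptotics you propose a Poincar\'e--Perron argument; the paper instead uses the exact Widom formula \eqref{Widom:Pn} for $P_n$ (available because $T$ is exactly block Toeplitz) together with \cite{BDK} for $Q_n$, though it remarks that a Poincar\'e-type theorem \cite{MN} would also work — so this part is an acceptable alternative route.

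However, two things go wrong. First, a sign/reciprocal slip: you claim $\prod_{j=1}^p F_j(x)$ is a root of $f(z,x)=0$ and identify it with the largest-modulus root $z_{p+1}(x)$. In fact $P_{mp+k-1}/P_{(m+1)p+k-1}\to 1/\prod_j F_j(x)$, and it is this \emph{reciprocal} that solves $f(z,x)=0$; tracing $x\to\infty$ gives $1/\prod F_j \sim x^{-p}$, matching the \emph{smallest}-modulus root $z_1(x)$ (cf.\ \eqref{growth:inf}). Moreover $\prod F_j \sim x^p$ grows faster than any root of $f$, so your literal claim cannot hold for $p\geq 2$.

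Second — and this is the genuine gap — you defer precisely the step that carries the mathematical weight. Having defined $\til z_k(x)$ as the restriction of $1/(\psi^{(1)}\cdots\psi^{(p)})$ to the $k$th sheet, one must prove that the ordering $|\til z_1|\leq\cdots\leq|\til z_{p+1}|$ holds globally (with strict inequality off $\Delta_k$), so that $\til z_k=z_k$ and hence $\Gamma_k=\Delta_k$. A priori the sets $\Gamma_k$ are finite unions of analytic arcs \emph{somewhere in the complex plane} — nothing yet places them on $\mathbb{R}$, let alone on $\Delta_k$. Your proposal asserts that a ``matching with the Riemann surface \dots identifies these cuts, intersection by intersection, with the intervals $\Delta_k$,'' and you yourself flag this as ``the main obstacle,'' but no argument is given. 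The paper closes exactly this gap by a potential-theoretic comparison: it introduces the (possibly signed) measures $s_k$ on $\Delta_k$ with total mass $p+1-k$ (from the logarithmic derivatives of $\til z_k$), compares their masses against the equilibrium measures $\sigma_k$ on $\Gamma_k$ (which also have total mass $p+1-k$), and extracts from a saturated chain of inequalities both $\Gamma_k\subset\bigcup_j\Delta_j$ and a constraint on clusters of equal-modulus roots (Lemma~\ref{lemma:chain}); from this it then runs an induction on $k$ to conclude $\Gamma_k=\Delta_k$ and $\til z_k=z_k$. Without some substitute for that total-mass comparison (or an appeal to Aptekarev's Propositions~2.1--2.2 in \cite{Apt}, which the paper cites as an alternative), the proof is incomplete.
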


Proposition~\ref{prop:QP:samelimit} will be proved in Section~\ref{section:ratioasy}. The equality \eqref{GammaDelta}
was already obtained by Aptekarev \cite[Prop.~2.1--2.2]{Apt} but we will provide an alternative proof.

\begin{remark} Once we have Theorems~\ref{theorem:main:ortho} and \ref{theorem:main:Nikishin} to our disposal,
it will follow that the polynomials $P_n$ are of multiple orthogonality with respect to a Nikishin system. Equation
\eqref{ratio:asy:PQ} is then a consequence of the main result in \cite{AptLopRoc}. It also follows that the zeros of
$P_n$ are simple and lie in the interior of $\Delta_1$, with the zeros of two consecutive polynomials interlacing
\cite[Theorem 2.1]{AptLopRoc}. Therefore in \eqref{ratio:asy:PQ} the convergence is uniform on each compact subset of
$\mathbb{C} \setminus \Delta_1$.
\end{remark}

\subsection{Strong asymptotics, Widom-type formulas, and the second kind functions}

As a consequence of Widom's determinant identity for block Toeplitz matrices \cite[Section 6]{Widom1}, we have the
following \emph{exact} formula for $P_{mp+k}(x)$ (see \cite[Section 4]{DelLop})
\begin{equation}\label{Widom:Pn}
P_{mp+k}(x) = \frac{(-1)^{p+k}}{f_{p}}\sum_{j=1}^{p+1} \frac{\det F^{[p,k+1]}(z_j(x),x)}{\prod_{i=1,i\neq
j}^{p+1}(z_{j}(x)-z_{i}(x))} z_j(x)^{-m-1},
\end{equation}
for all $m\in\zet_{\geq 0}$ and $k=0,1,\ldots,p-1$, where we write
\begin{equation}\label{def:fp}
f_{p}=\prod_{i=0}^{p-1} b_{p+i,i},
\end{equation}
and where we denote with $F^{[p,k+1]}$ the submatrix of $F$ in \eqref{symbol:Toeplitz} obtained by deleting row $p$ and
column $k+1$. Note that we label the rows and columns of $F$ as $1,2,\ldots,p$, i.e., we start counting from $1$ rather
than~$0$. In writing \eqref{Widom:Pn}, we are implicitly using the fact that $f_{p}\neq 0$. This will be justified in
Section~\ref{section:ratioasy}, see \eqref{equation:psi:k}.

Equation~\eqref{Widom:Pn} holds for all $x$ for which the roots $z_j(x)$ are pairwise distinct. If $x\in\cee$ is such
that two roots $z_j(x)$ and $z_k(x)$ with $j\neq k$ are equal (there are only finitely many such $x$) then
\eqref{Widom:Pn} remains true provided we replace the right hand side by its limiting value.

As a consequence of \eqref{Widom:Pn} we have the strong asymptotic formula
\begin{equation}\label{strongasy:Pn}
\lim_{m\rightarrow\infty}P_{mp+k}(x)\, z_{1}(x)^{m+1}=\frac{(-1)^{p+k}}{f_{p}}\frac{\det
F^{[p,k+1]}(z_{1}(x),x)}{\prod_{i=2}^{p+1}(z_{1}(x)-z_{i}(x))},
\end{equation}
for any fixed $k\in\{0,\ldots,p-1\}$, uniformly on compact subsets of $\mathbb{C}\setminus\Gamma_{1}$ \cite[Section
4]{DelLop}.

\smallskip
In this paper we will obtain similar Widom-type formulas for the second kind functions of the Nikishin system generated
by the $P_n$. Recall that the second kind functions $\Psi_{n,l}(z)$ are defined recursively by \cite{GRS}
\begin{equation}\label{secondkind:1} \Psi_{n,0}(z) := P_n(z)
\end{equation}
and \begin{equation}\label{secondkind:2} \Psi_{n,l}(z) = \int_{\Delta_l} \frac{\Psi_{n,l-1}(t)}{z-t}\ud \rho_l(t),
\end{equation}
for $l=1,\ldots,p$, where $\rho_l$ is the measure supported on $\Delta_l$ described in
Theorem~\ref{theorem:main:Nikishin}. The statement of the Widom-type formulas for $\Psi_{n,l}(z)$ requires some extra
notations and is deferred to Section~\ref{section:Widom:secondkind}.

\subsection{Outline of the paper}

This paper is organized as follows. In Section~\ref{section:kalyagin} we recall some general results on multiple
orthogonality relations for banded Hessenberg operators. In Section~\ref{section:ChebNikpoly} we apply these results to
the Chebyshev-Nikishin polynomials $P_n$. Section~\ref{section:ratioasy} proves Proposition~\ref{prop:QP:samelimit} on
the ratio asymptotics of the polynomials $P_n$. In Section~\ref{section:proof:main:ortho} we use these considerations
to prove Theorem~\ref{theorem:main:ortho}. In Section~\ref{section:proof:main:Nik} we prove
Theorem~\ref{theorem:main:Nikishin}. In Section~\ref{section:rellimitcoeff} we prove
Theorem~\ref{theorem:relations:bij} on the relations between the coefficients $b_{i,j}$ in \eqref{def:limitreccoeff}.
In Section~\ref{section:Widom:secondkind} we obtain strong asymptotics and exact Widom-type formulas for the second
kind functions $\Psi_{n,l}(z)$ in \eqref{secondkind:1}--\eqref{secondkind:2}. Finally,
Section~\ref{section:concluding:remark} discusses the generalization of our results to some alternative systems of
multi-indices $\mathbf{n}$.

\section{Multiple orthogonality relations for banded Hessenberg operators}
\label{section:kalyagin}

In this section we recall some general results on multiple orthogonality relations for banded Hessenberg matrices,
following Kaliaguine \cite{Kal1}, see also Van Iseghem \cite{VIse} and Kaliaguine \cite{Kal2}. We consider a banded Hessenberg matrix $A$ with $p+2$
diagonals of the form \eqref{matrixA}. We associate to $A$ the following sequences of polynomials that we denote by
$(q_{n})_{n}$ and $(p_{n}^{(j)})_{n}$, where $j\in\{1,\ldots,p\}$. We require these polynomials to satisfy the
recurrence relation
\begin{equation}\label{recqnpn}
z y_{n}=y_{n+1}+a_{n,n}\, y_{n}+a_{n,n-1}\, y_{n-1}+\cdots+a_{n,n-p}\, y_{n-p},\qquad n\geq 0,
\end{equation}
with initial conditions
\begin{equation}\label{initcond}
\begin{array}{cccccccc}
n & = & -p & -p+1 & -p+2 & \ldots & -1 & 0\\ \hline p_{n}^{(1)} & = & 1 & 0 & 0 & \ldots & 0 & 0 \\ p_{n}^{(2)} & = & 0
& 1 & 0 & \ldots & 0 & 0 \\ p_{n}^{(3)} & = & 0 & 0 & 1 & \ldots & 0 & 0 \\
 & \vdots & & & & \vdots & &\\
p_{n}^{(p)} & = & 0 & 0 & 0 & \ldots & 1 & 0 \\ q_{n} & = & 0 & 0 & 0 & \ldots & 0 & 1
\end{array}
\end{equation}
In our definition, we take $a_{i,j}=-1$ for $0\leq i\leq p-1$ and $j\leq -1$. Observe that for $n\geq 0$, $q_{n}$ is a
polynomial of degree $n$ and $p_{n}^{(j)}$ is a polynomial of degree $n-1$ for all $j\in\{1,\ldots,p\}$.

Let $A_{n}$ be the truncation of $A$ to the first $n$ rows and columns. We define
\begin{align}
D_{n}(z) & :=\det (z I_{n}-A_{n}),\label{defDn}\\ D_{n}^{(j)}(z) & :=\det (z
I_{n-j}-\left(A_{n}\right)^{[1,\ldots,j;1,\ldots,j]}),\qquad 1\leq j\leq p, \label{defnDnj}
\end{align}
where $I_{k}$ denotes the identity matrix of size $k$, and $M^{[1,\ldots,j;1,\ldots,j]}$ denotes the submatrix of $M$
obtained after deleting the first $j$ rows and columns. Of course, we set
\[
D_{0}(z)\equiv D_{1}^{(1)}(z)\equiv D_{2}^{(2)}(z)\equiv \cdots \equiv D_{p}^{(p)}(z)\equiv 1,\\
\]
and we take $D_{n}(z)\equiv 0$, $n<0$, and $D_{n}^{(j)}(z)\equiv 0$, $n<j$.

Expanding $D_n$ by its last row, it is easy to check that
$$
q_{n}=D_{n},\qquad n\geq 0
$$
since both sequences satisfy the same recurrence relation and initial conditions. We also have
\begin{lemma}\label{lemmarel}
The following relation holds for every $j\in\{1,\ldots,p\}$:
\begin{equation}\label{relpnDn}
p_{n}^{(j)}=D_{n}^{(1)}+D_{n}^{(2)}+\cdots+D_{n}^{(j)},\qquad n\geq 0.
\end{equation}
\end{lemma}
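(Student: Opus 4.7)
The plan is to show that both sides of \eqref{relpnDn} satisfy the same inhomogeneous linear recurrence in $n$ and agree at $n=0$, so that induction on $n$ delivers the identity for all $n \geq 0$.

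For the right-hand side I would first observe that, for each fixed $k$, the matrix $(A_{n+1})^{[1,\ldots,k;1,\ldots,k]}$ is the leading $(n+1-k)\times(n+1-k)$ principal submatrix of a banded Hessenberg matrix with the same structure as $A$ (namely, $1$'s on the superdiagonal and the $a_{i,j}$'s on the $p$ subdiagonals, with indices shifted by $k$). Expanding $D_{n+1}^{(k)}$ along its last row and exploiting the superdiagonal of $1$'s, in the same way that $q_n=D_n$ was verified, gives the standard recurrence
\[ D_{n+1}^{(k)} = zD_n^{(k)} - \sum_{i=0}^{p} a_{n,n-i}\, D_{n-i}^{(k)},\qquad n\geq k, \]
with the convention $D_m^{(k)}=0$ for $m<k$. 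Matching the true value $D_k^{(k)}=1$ against what the naive recurrence would predict at $n=k-1$ (namely $0$), this extends to all $n\geq 0$ provided one appends an inhomogeneous term $\mathbf{1}_{n+1=k}$ to the right-hand side. Summing over $k=1,\ldots,j$ and setting $E_n^{(j)}:=\sum_{k=1}^j D_n^{(k)}$ (with $E_m^{(j)}:=0$ for $m<0$) I get
\[ E_{n+1}^{(j)} = zE_n^{(j)} - \sum_{i=0}^{p} a_{n,n-i}\, E_{n-i}^{(j)} + \mathbf{1}_{n+1\leq j},\qquad n\geq 0. \]

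For the left-hand side I would extract from the recurrence \eqref{recqnpn} the contributions produced by the convention $a_{i,j}=-1$ for $j\leq -1$. When $n\leq p-1$, the sum $\sum_{i=0}^p a_{n,n-i}\, p_{n-i}^{(j)}$ splits into a part with $n-i\geq 0$ (involving the genuine coefficients and polynomial values) and a ``negative-index'' part. In the latter the only nonzero initial value from \eqref{initcond} is $p_{j-p-1}^{(j)}=1$, which contributes $-(-1)\cdot 1=1$ precisely when $j-p-1\in\{n-p,\ldots,-1\}$, i.e.\ when $n\leq j-1$. Setting $p_m^{(j)}:=0$ for $m<0$, the recurrence for $p_n^{(j)}$ then becomes
\[ p_{n+1}^{(j)} = zp_n^{(j)} - \sum_{i=0}^{\min(n,p)} a_{n,n-i}\, p_{n-i}^{(j)} + \mathbf{1}_{n+1\leq j},\qquad n\geq 0, \]
identical to the recurrence obeyed by $E_n^{(j)}$.

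Since $p_0^{(j)}=0=E_0^{(j)}$, induction on $n$ yields $p_n^{(j)}=E_n^{(j)}$ for every $n\geq 0$, proving the lemma. I expect the main difficulty to lie in the bookkeeping of the previous paragraph: verifying that the ``$+1$'' correction on the $p_n^{(j)}$ side (produced by the placement of the unique nonzero initial value in \eqref{initcond} together with the convention $a_{i,j}=-1$) matches exactly the ``$+1$'' correction on the $E_n^{(j)}$ side (produced by the jump $D_{k-1}^{(k)}=0 \to D_k^{(k)}=1$). Both corrections turn out to equal $\mathbf{1}_{n+1\leq j}$, and it is this alignment that makes the identity work.
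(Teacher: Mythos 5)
Your proof is correct and is essentially the same strategy as the paper's: both arguments rest on verifying that the two sides obey the same $(p+2)$-term recurrence and matching initial values, with the convention $a_{i,j}=-1$ for $0\le i\le p-1$, $j\le -1$, doing exactly the work of feeding the single nonzero initial entry $p^{(j)}_{j-1-p}=1$ into the positive-index range. The paper organizes this as a telescoping induction on $j$ (first $p_n^{(1)}=D_n^{(1)}$, then $p_n^{(j)}-p_n^{(j-1)}=D_n^{(j)}$), whereas you sum first and package both corrections into a common inhomogeneous term $\mathbf 1_{n+1\le j}$, then induct on $n$; this is a tidier bookkeeping of the same idea rather than a different method.
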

\begin{proof}
It easily follows from the definition \eqref{defnDnj} that the sequence of polynomials $D_{n}^{(j)}$ satisfies the
recurrence relation
\begin{equation}\label{recDnj}
z D_{n}^{(j)}(z)=D_{n+1}^{(j)}(z)+a_{n,n}\,
D_{n}^{(j)}(z)+a_{n,n-1}\,D_{n-1}^{(j)}(z)+\cdots+a_{n,n-p}\,D_{n-p}^{(j)}(z),\qquad n\geq j.
\end{equation}
Since $p_{1}^{(1)}(z)=1=D_{1}^{(1)}(z)$, and $p_{n}^{(1)}=D_{n}^{(1)}=0$ for $1-p\leq n\leq 0$, it follows from
\eqref{recqnpn} and \eqref{recDnj} that $p_{n}^{(1)}=D_{n}^{(1)}$ for all $n\geq 0$.

Next we show that
\begin{equation}\label{rel}
p_{n}^{(j)}(z)-p_{n}^{(j-1)}(z)=D_{n}^{(j)}(z) \qquad\mbox{for all}\,\,n\geq 0.
\end{equation}
Evidently,
\begin{equation}\label{recdifpn}
z(p_{n}^{(j)}-p_{n}^{(j-1)})=p_{n+1}^{(j)}-p_{n+1}^{(j-1)}+a_{n,n}(p_{n}^{(j)}-p_{n}^{(j-1)})
+\cdots+a_{n,n-p}(p_{n-p}^{(j)}-p_{n-p}^{(j-1)}),\qquad n\geq 0.
\end{equation}
Recall that the initial conditions \eqref{initcond} hold, in particular $p_{j-1-p}^{(j)}=1$ and $p_{j-2-p}^{(j-1)}=1$.
By definition, $a_{k,j}=-1$ for $0\leq k\leq p-1$ and $j\leq -1$, so if we set $n=0$ in \eqref{recdifpn} we get
$p_{1}^{(j)}-p_{1}^{(j-1)}=0$. If we continue evaluating $n$ from $1$ to $j-2$ in \eqref{recdifpn}, we obtain
$p_{n}^{(j)}-p_{n}^{(j-1)}=0$ for $2\leq n\leq j-1$. Therefore \eqref{rel} holds for $0\leq n\leq j-1$.

If we now put $n=j-1$ in \eqref{recdifpn}, we get $p_{j}^{(j)}-p_{j}^{(j-1)}=1=D_{j}^{(j)}$. And now \eqref{rel} will
follow immediately from \eqref{recdifpn}, \eqref{recDnj}, and the fact that $p_{n}^{(j)}-p_{n}^{(j-1)}=D_{n}^{(j)}$ for
$j-p\leq n\leq j$. Therefore \eqref{relpnDn} can be proved now by induction on $j$.
\end{proof}

We recall some results due to Kaliaguine \cite{Kal1,Kal2}, see also Van Iseghem \cite{VIse}. In order to state these results,
we introduce some notation. We will assume that the coefficients of the matrix $A$ are uniformly bounded. Let
$\{\mathbf e_{n}\}_{n=0}^{\infty}$ denote the standard basis in $l^{2}$, and consider the following resolvent
functions
\begin{equation}\label{resolvf}
g_{j}(z):=(R_{z} \mathbf e_{j-1},\mathbf e_{0}),\qquad 1\leq j\leq p,
\end{equation}
where $R_{z}=(zI-A)^{-1}$ is the resolvent operator, and $(\cdot,\cdot)$ is the standard inner product in $l^{2}$. We
also define
\begin{equation}
\phi_{j}(z):=g_{1}(z)+g_{2}(z)+\cdots+g_{j}(z),\qquad 1\leq j\leq p.
\end{equation}
For each $j=1,\ldots,p$, we introduce a linear functional $L_{j}$ defined on the space of polynomials by
\[
L_{j}(z^{n})=(A^{n} \mathbf v_{j},\mathbf e_{0}),
\]
where $\mathbf v_{j}:=\mathbf e_{0}+\cdots+\mathbf e_{j-1}$. Observe that the sequence
$(\mu_{j,n})_{n=0}^{\infty}=(L_{j}(z^n))_{n=0}^{\infty}$ is the sequence of ``moments" for $\phi_{j}$, in the sense
that
\[
\phi_{j}(z)=\sum_{n=0}^{\infty}\frac{\mu_{j,n}}{z^{n+1}},
\]
for all $z\in\cee$ sufficiently large.

\begin{theorem}[See \cite{Kal1,VIse}]\label{theoKal}
$a)$ The vector of rational functions
\[
\Big(\frac{p_{n}^{(1)}}{q_{n}},\frac{p_{n}^{(2)}}{q_{n}},\ldots,\frac{p_{n}^{(p)}}{q_{n}}\Big)
\]
is a Hermite-Pad\'{e} approximant to the system $(\phi_{1},\phi_{2},\ldots,\phi_{p})$, with respect to the multi-index
$\mathbf{n}$ in \eqref{multi:index}. That is, for each $j=1,\ldots,p$, we have
\[
q_{n}(z) \phi_{j}(z)-p_{n}^{(j)}(z)=O\Big(\frac{1}{z^{n_{j}+1}}\Big),\qquad z\rightarrow\infty,
\]
where $n_{j}$ is the $j$-th component of $\mathbf{n}$.

$b)$ For each $j\in\{1,\ldots,p\}$, the polynomial $q_{n}$ satisfies the following orthogonality conditions:
\[
L_{j}(q_{n} z^{l})=0,\qquad l=0,\ldots,n_{j}-1.
\]
\end{theorem}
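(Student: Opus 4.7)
The plan is to prove parts (a) and (b) together by exploiting the equivalence of Hermite--Pad\'e order with vanishing of moments, together with a path-counting estimate for the resolvent at infinity. From the definition of $L_j$ we have the Laurent series $\phi_j(z) = \sum_{\ell=0}^\infty L_j(z^\ell)\, z^{-\ell-1}$ valid for $|z|$ large, so the condition $q_n(z)\phi_j(z) - p_n^{(j)}(z) = O(z^{-n_j-1})$ in (a) is equivalent to the conditions $L_j(z^\ell q_n) = 0$ for $\ell = 0, \ldots, n_j - 1$ in (b), provided the polynomial $p_n^{(j)}$ is correctly identified as the polynomial part of $q_n \phi_j$ modulo $z^{-n_j}$. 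Hence it suffices to prove (a).

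The polynomial part is identified via a finite-matrix computation. Setting $g_i^{(n)}(z) := ((zI_n - A_n)^{-1})_{0, i-1}$ and applying Cramer's rule, I would show that after removing row $i-1$ and column $0$ from $zI_n - A_n$ the resulting $(n-1)\times(n-1)$ submatrix is block lower-triangular: the upper-left $(i-1)\times(i-1)$ block is lower triangular with $-1$'s on the diagonal (the shifted entries of the unit superdiagonal), while the lower-right $(n-i)\times(n-i)$ block is exactly the matrix defining $D_n^{(i)}$. The sign $(-1)^{i-1}$ from the lower-triangular block cancels the cofactor sign and yields the exact identity
\[
g_i^{(n)}(z) \;=\; \frac{D_n^{(i)}(z)}{D_n(z)} \;=\; \frac{D_n^{(i)}(z)}{q_n(z)}.
\]
Summing on $i=1,\ldots,j$ and applying Lemma~\ref{lemmarel}, this gives the polynomial identity $q_n(z)\sum_{i=1}^j g_i^{(n)}(z) = p_n^{(j)}(z)$.

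The core of the argument is to control the discrepancy $g_i(z) - g_i^{(n)}(z)$ as $z \to \infty$. Expanding both in powers of $z^{-1}$, the coefficient of $z^{-\ell-1}$ is $((A^\ell - A_n^\ell)\mathbf{e}_{i-1}, \mathbf{e}_0)$. Expanding $A^\ell$ monomially in the matrix entries, this is a weighted count of length-$\ell$ walks in the weighted graph of $A$ starting at node $i-1$, ending at node $0$, and visiting at least one node of index $\geq n$ (walks staying in $\{0,\ldots,n-1\}$ contribute identically to $A^\ell$ and $A_n^\ell$). In the banded Hessenberg, each step changes the node index by $-1$ (via the unit superdiagonal) or by $0,1,\ldots,p$ (via the main and $p$ subdiagonals). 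Hence reaching some node of index $\geq n$ from $i-1$ requires at least $\lceil(n-i+1)/p\rceil$ steps, and returning from there to node $0$ requires at least $n$ further steps. Writing $n = mp + k$ and examining separately the cases $i\leq k$ and $i>k$, a direct computation shows
\[
\lceil(n-i+1)/p\rceil \;=\; n_i, \qquad i=1,\ldots,p.
\]
Consequently $g_i(z) - g_i^{(n)}(z) = O(z^{-n-n_i-1})$, and multiplying by $q_n$ of degree $n$ gives $q_n(z)\,g_i(z) - D_n^{(i)}(z) = q_n(z)\,[g_i(z) - g_i^{(n)}(z)] = O(z^{-n_i-1})$.

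Summing the latter over $i=1,\ldots,j$ and using $n_j = \min_{1\leq i\leq j} n_i$ (from the staircase shape of $\mathbf{n}$), one obtains $q_n(z)\phi_j(z) - p_n^{(j)}(z) = O(z^{-n_j-1})$, proving (a); part (b) then follows from the Laurent-coefficient equivalence observed in the first paragraph. The hard part is the path-counting argument of the third paragraph: it is there that the staircase structure of the multi-index $\mathbf{n}$ meets the geometry of the banded Hessenberg operator, and the identity $\lceil(n-i+1)/p\rceil = n_i$ furnishes the precise link between the order of the Hermite--Pad\'e approximation and the width of the band of $A$.
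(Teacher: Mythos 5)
The paper does not actually prove Theorem~\ref{theoKal}; it is stated as a recalled result attributed to Kaliaguine and Van Iseghem, so there is no internal proof to compare against. Your proposal supplies an independent, self-contained proof, and it is correct.

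The substance checks out. The Cramer's-rule step gives $((zI_n-A_n)^{-1})_{0,i-1}=D_n^{(i)}(z)/D_n(z)$: after deleting row $i-1$ and column $0$ from $zI_n-A_n$, the top-right block is zero (those positions lie strictly above the superdiagonal), the top-left $(i-1)\times(i-1)$ block is lower triangular with $-1$'s on the diagonal, and the bottom-right block is exactly $zI_{n-i}-(A_n)^{[1,\ldots,i;1,\ldots,i]}$; the sign $(-1)^{i-1}$ cancels the cofactor sign, and combining with Lemma~\ref{lemmarel} gives the polynomial identity $q_n\sum_{i=1}^{j}g_i^{(n)}=p_n^{(j)}$. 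The walk-counting estimate is the crux and is also right: in the graph of the banded Hessenberg $A$, a walk from node $0$ to node $i-1$ increases the index by at most $1$ per step and decreases it by at most $p$ per step, so any walk that leaves $\{0,\ldots,n-1\}$ has length at least $n+\lceil(n-i+1)/p\rceil$; a short check splitting $n=mp+k$ into the cases $i\leq k$ and $i>k$ confirms $\lceil(n-i+1)/p\rceil=n_i$. Hence $g_i-g_i^{(n)}=O(z^{-n-n_i-1})$, and multiplying by the degree-$n$ polynomial $q_n$, summing over $i\leq j$, and using $n_j=\min_{i\leq j}n_i$ gives $(a)$; $(b)$ then follows from the Laurent-coefficient identity $[z^{-m-1}]\bigl(q_n\phi_j\bigr)=L_j(z^m q_n)$. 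You are also right to observe that $(b)$ alone is strictly weaker than $(a)$ — one also needs $p_n^{(j)}$ to be the polynomial part of $q_n\phi_j$ — so proving $(a)$ directly, rather than proving $(b)$ and inferring $(a)$, is the correct order of attack.
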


\begin{remark}\label{rmkHP}
Making use of Theorem \ref{theoKal} and Lemma \ref{lemmarel}, by linearity we obtain that
\[
\Big(\frac{D_{n}^{(1)}}{D_{n}},\frac{D_{n}^{(1)}+c_{1,1}
D_{n}^{(2)}}{D_{n}},\ldots,\frac{D_{n}^{(1)}+c_{1,p-1}D_{n}^{(2)}+c_{2,p-1}D_{n}^{(3)}+\cdots+c_{p-1,p-1}D_{n}^{(p)}}{D_{n}}\Big)
\]
is a Hermite-Pad\'{e} approximant to the system of functions
\[
(g_{1},g_{1}+c_{1,1}g_{2},g_{1}+c_{1,2}g_{2}+c_{2,2}g_{3},\ldots,g_{1}+c_{1,p-1}g_{2}+\cdots+c_{p-1,p-1}g_{p}),
\]
with respect to the multi-index \eqref{multi:index}. Here, $c_{i,j}$ denote arbitrary constants.
\end{remark}

The recent survey \cite{AptKuij} contains information on the latest developments in the theory of Hermite-Pad\'e approximation, multiple orthogonal polynomials, and their applications to random matrix theory.

\section{Chebyshev-Nikishin polynomials}
\label{section:ChebNikpoly}

In this section we  apply the results of Section~\ref{section:kalyagin} to the Chebyshev-Nikishin polynomials
$(P_{n})_{n=0}^{\infty}$ defined by \eqref{def:limitreccoeff}--\eqref{Pn:def}. To this end it is convenient to use an
alternative determinantal formula for $P_n$. In fact, if we applied the results of Section~\ref{section:kalyagin}
directly to the operator $T$ in \eqref{defH} then we would need to control the limiting ratios
$$ \frac{\det \big(z
I_{n-j}-\left(T_{n}\right)^{[1,\ldots,j;1,\ldots,j]}\big)}{\det (z I_{n}-T_{n}\big)}
$$
for $n\to\infty$ and for fixed $j=1,\ldots,p$. These limiting ratios are hard to deal with. Therefore we use a
different approach. We will use a reflection of $T_n$ with respect to its main anti-diagonal, which turns the above
ratios into expressions of the form
$$ \frac{\det (z
I_{n-j}-T_{n-j})}{\det (z I_{n}-T_{n})} \equiv \frac{P_{n-j}(z)}{P_{n}(z)}.
$$
Note that the matrix in the numerator of the left hand side, is obtained from the matrix in the denominator by skipping
its \emph{last} $j$ rows and columns (rather than the \emph{first} $j$ rows and columns, thanks to the anti-diagonal
reflection.) The limiting ratios for $n\to\infty$ will then be obtained with the help of equation \eqref{ratio:asy:PQ}
(depending on the residue class of $n$ modulo $p$).

\smallskip
Now we work out the above ideas in detail. For a fixed $k\in\{0,\ldots,p-1\}$, let
\[
T^{(k)}:=T^{[1,\ldots,k;1,\ldots,k]},
\]
i.e., $T^{(k)}$ is the infinite matrix obtained by deleting the first $k$ rows and columns of $T$. Of course, we
understand that $T^{(0)}=T$. Note that $T^{(k)}$ is also a tridiagonal block Toeplitz matrix
\[
T^{(k)}=\begin{pmatrix} B_{0}^{(k)} & B_{-1}^{(k)}\\ B_{1}^{(k)} & B_{0}^{(k)} & B_{-1}^{(k)} \\
 & B_{1}^{(k)} & B_{0}^{(k)} & B_{-1}^{(k)} \\
 & & B_{1}^{(k)} & B_{0}^{(k)} & \ddots \\
 & & & \ddots & \ddots
\end{pmatrix},
\]
where the blocks of $T^{(k)}$ are also of size $p\times p$ and can be computed easily. Let $\Pi_{p}$ be the $p\times p$
permutation matrix that consists of $1$'s in the main anti-diagonal and $0$'s elsewhere, i.e.
\begin{equation}\label{def:permut}
\Pi_{p}(i,j)=\left\{\begin{array}{cl} 1, & \mbox{if}\,\, i+j=p+1,\\ 0, & \mbox{otherwise},
\end{array}
\right.
\end{equation}
where $\Pi_{p}(i,j)$ represents the entry in row $i$ and column $j$ of $\Pi_{p}$. We construct now a new block Toeplitz
matrix $\widetilde{T}^{(k)}$ as follows:
\[
\widetilde{T}^{(k)}=\begin{pmatrix} \widetilde{B}_{0}^{(k)} & \widetilde{B}_{-1}^{(k)}\\ \widetilde{B}_{1}^{(k)} &
\widetilde{B}_{0}^{(k)} & \widetilde{B}_{-1}^{(k)} \\
 & \widetilde{B}_{1}^{(k)} & \widetilde{B}_{0}^{(k)} & \widetilde{B}_{-1}^{(k)} \\
 & & \widetilde{B}_{1}^{(k)} & \widetilde{B}_{0}^{(k)} & \ddots \\
 & & & \ddots & \ddots
\end{pmatrix},
\]
where
\[
\widetilde{B}_{i}^{(k)}=\Pi_{p} (B_{i}^{(k)})^{T} \Pi_{p},\qquad i=-1,0,1,
\]
where the superscript ${}^T$ stands for the matrix transposition. So the block $\widetilde{B}_{i}^{(k)}$ is obtained by
reflecting $B_{i}^{(k)}$ with respect to its main anti-diagonal.

\begin{lemma}\label{propformula}
Fix $n=mp+k$, with $m\in\zet_{\geq 0}$ and $k\in\{0,1,\ldots,p-1\}$. Then the Chebyshev-Nikishin polynomials
$\{P_{n-j}\}_{j=0}^{n}$ are obtained by the formulas
\begin{equation}\label{detformPn}
P_{n-j}(z)=\det (z I_{n-j}-(\widetilde{T}^{(k)}_n)^{[1,\ldots,j;1,\ldots,j]}),\qquad 0\leq j\leq n,
\end{equation}
where $I_{n-j}$ is the identity matrix of size $n-j$ and $\widetilde{T}^{(k)}_n$ is the truncation of
$\widetilde{T}^{(k)}$ to the first $n$ columns and rows.
\end{lemma}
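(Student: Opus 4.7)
The plan is to reduce the lemma to the single matrix identity
\begin{equation}\label{plan:key}
\widetilde{T}^{(k)}_{n} \;=\; \Pi_{n}\, T_{n}^{T}\, \Pi_{n}, \qquad n = mp+k,
\end{equation}
which says that the finite matrix $\widetilde{T}^{(k)}_{n}$ is precisely the anti-diagonal reflection of the transpose $T_{n}^{T}$. The alignment $n \equiv k \pmod{p}$ between the truncation size and the block-shift $k$ is essential here: the identity \eqref{plan:key} would fail for other residues, because the partial final block of $\widetilde{T}^{(k)}_{n}$ in rows and columns $mp+1,\ldots,mp+k$ must line up with the first $k$ rows and columns stripped off $T$ to form $T^{(k)}$.

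Once \eqref{plan:key} is in hand, the lemma follows from basic determinantal manipulations. For $j=0$, invariance of the characteristic polynomial under transposition and under similarity by $\Pi_{n}$ yields
\[
\det(zI_{n} - \widetilde{T}^{(k)}_{n}) \;=\; \det(zI_{n} - T_{n}^{T}) \;=\; \det(zI_{n} - T_{n}) \;=\; P_{n}(z).
\]
For $j\geq 1$, I would use the elementary identity
\[
(\Pi_{n} M \Pi_{n})^{[1,\ldots,j;\,1,\ldots,j]} \;=\; \Pi_{n-j}\, N\, \Pi_{n-j},
\]
valid for any $n\times n$ matrix $M$, where $N$ is the leading $(n-j)\times(n-j)$ principal submatrix of $M$. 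In other words, deleting the \emph{first} $j$ rows and columns of an anti-diagonally reflected matrix is the same as reflecting after having deleted the \emph{last} $j$ rows and columns of the original. Applied to $M = T_{n}^{T}$, whose leading $(n-j)\times(n-j)$ submatrix equals $T_{n-j}^{T}$, this gives $(\widetilde{T}^{(k)}_{n})^{[1,\ldots,j;\,1,\ldots,j]} = \Pi_{n-j}\, T_{n-j}^{T}\, \Pi_{n-j}$, whose characteristic polynomial is $P_{n-j}(z)$.

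The main obstacle is therefore to establish \eqref{plan:key}, and I would do this by a direct entry-wise comparison. Writing $r=(a-1)p+\alpha$ and $s=(b-1)p+\beta$ with $\alpha,\beta\in\{1,\ldots,p\}$, the block Toeplitz structure of $\widetilde{T}^{(k)}$ together with the definition $\widetilde{B}_{i}^{(k)} = \Pi_{p} (B_{i}^{(k)})^{T} \Pi_{p}$ and the fact that $B_{i}^{(k)}$ are the blocks of $T^{(k)}=T^{[1,\ldots,k;\,1,\ldots,k]}$ expresses $\widetilde{T}^{(k)}_{n}(r,s)$ as a specific entry $T(R,S)$ of the original banded Hessenberg matrix, with $R,S$ determined by $(a,b,\alpha,\beta,k)$. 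On the other hand, $(\Pi_{n} T_{n}^{T} \Pi_{n})(r,s) = T(n-s+1,\,n-r+1)$. The two expressions must then be matched using the periodicity $T(r+p,s+p)=T(r,s)$ of the recurrence coefficients $b_{i,j}$. The matching splits into three cases, $a=b$, $a=b+1$, and $a=b-1$; each case is routine, the key identifications being $n - r \equiv p+k-\alpha$ and $n - s \equiv p+k-\beta \pmod{p}$, which are exactly what is needed to absorb the block shift by $k$ at the boundary.
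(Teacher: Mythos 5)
Your proof is correct and takes essentially the same route as the paper: reduce the lemma to the single matrix reflection identity and then exploit the elementary behaviour of anti-diagonal reflection under deletion of the first $j$ rows and columns. One worthwhile observation: the paper's proof asserts $\widetilde{T}^{(k)}_n = \Pi_n T_n \Pi_n^T$, which, since $\Pi_n$ is symmetric, is the $180^\circ$ rotation of $T_n$ and is actually upper Hessenberg (it has $p$ superdiagonals and one subdiagonal), so it cannot literally equal the lower-Hessenberg $\widetilde{T}^{(k)}_n$; the intended identity is your $\widetilde{T}^{(k)}_n = \Pi_n T_n^T \Pi_n$, the anti-transpose. This makes no difference to the conclusion, because both $\det(zI_{n-j}-\Pi_{n-j}T_{n-j}\Pi_{n-j})$ and $\det(zI_{n-j}-\Pi_{n-j}T_{n-j}^T\Pi_{n-j})$ equal $\det(zI_{n-j}-T_{n-j}) = P_{n-j}(z)$, but you state the identity in the form that actually holds entry-by-entry, and you supply the routine but non-trivial verification (which the paper omits), correctly isolating the role of the residue alignment $n\equiv k\pmod p$: both the row and column indices of the matched entries of $T$ then differ by the \emph{same} multiple of $p$, so periodicity $T(r+p,s+p)=T(r,s)$ closes the argument.
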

\begin{proof}
Formula \eqref{detformPn} follows directly from the relation $\widetilde{T}^{(k)}_n=\Pi_{n}\, T_{n}\, \Pi_{n}^{T}$,
where $\Pi_{n}$ is the $n\times n$ permutation matrix defined as in \eqref{def:permut}.
\end{proof}

We keep $n=mp+k$ fixed in what follows. If we set $A=\widetilde{T}^{(k)}$, then Lemma \ref{propformula} asserts that
the polynomials $\{P_{n-j}\}_{j=0}^{p}$ are of the form $D_{n}^{(j)}$, cf. \eqref{defDn}--\eqref{defnDnj}. So (see
Remark \ref{rmkHP}) we know that
\begin{equation}\label{HPapprox}
\Big(\frac{P_{n-1}}{P_{n}},\frac{P_{n-1}+c_{1,1} P_{n-2}}{P_{n}},\ldots,
\frac{P_{n-1}+c_{1,p-1}P_{n-2}+c_{2,p-1}P_{n-3}+\cdots+c_{p-1,p-1}P_{n-p}}{P_{n}}\Big)
\end{equation}
is a Hermite-Pad\'{e} approximant to the system of functions
\[
(g_{1}^{(k)},g_{1}^{(k)}+c_{1,1}g_{2}^{(k)},g_{1}^{(k)}+c_{1,2}g_{2}^{(k)}+c_{2,2}g_{3}^{(k)},
\ldots,g_{1}^{(k)}+c_{1,p-1}g_{2}^{(k)}+\cdots+c_{p-1,p-1}g_{p}^{(k)}),
\]
with respect to the multi-index \eqref{multi:index} and the functions $g_{i}^{(k)}$ are the resolvent functions
\eqref{resolvf} associated with the operator $\widetilde{T}^{(k)}$.

\section{Proof of Proposition~\ref{prop:QP:samelimit}}\label{section:ratioasy}

In this section we prove Proposition~\ref{prop:QP:samelimit} on the ratio asymptotics of the Chebyshev-Nikishin
polynomials $P_n$. In the next section we will use this to prove Theorems~\ref{theorem:main:ortho} and
\ref{theorem:main:Nikishin}.

Recall the notations in Section~\ref{subsection:aboutproof}. Since the block Toeplitz matrix $T$ in
\eqref{defH}--\eqref{blocks:Bminus1} has a lower Hessenberg shape, each set $\Gamma_k$ in \eqref{defGammas} is
non-empty and is a finite union of analytic arcs, see \cite[Prop.~1.1 and Lemma 2.1]{Del}. From \cite{Apt} or
\cite[Example~5.3]{Del} (or straightforward verification) we also know that the roots $z_k(x)$ in
\eqref{ordering:roots} have the asymptotics
\begin{equation}\label{growth:inf}
\begin{array}{ll}
z_{1}(x) =\frac{1}{x^{p}}+O\Big(\frac{1}{x^{p+1}}\Big), & x\rightarrow\infty,\\[1em] C_1|x|<|z_k(x)|<C_2|x|, &
x\rightarrow\infty,\quad 2\leq k\leq p+1,
\end{array}
\end{equation}
for certain constants $C_1,C_2>0$. Therefore $\Gamma_{1}$ is compact.\smallskip

We also need some results and notations from \cite{AptLopRoc}. Let $\mathcal{R}$ denote the compact Riemann surface
\[
\mathcal{R}=\overline{\bigcup_{k=0}^{p}\mathcal{R}_{k}}
\]
formed by gluing in the usual crosswise manner the consecutive sheets
\[
\mathcal{R}_{0}:=\overline{\mathbb{C}}\setminus\Delta_{1},\qquad \mathcal{R}_{k}=\overline{\mathbb{C}}\setminus
(\Delta_{k}\cup\Delta_{k+1}),\quad k=1,\ldots,p-1,\qquad \mathcal{R}_{p}:=\overline{\mathbb{C}}\setminus\Delta_{p}.
\]
Let $\psi^{(k)}$, $k=1,\ldots,p$, denote the meromorphic function on $\mathcal{R}$ whose divisor consists of a simple
pole at $\infty^{(0)}\in\mathcal{R}_{0}$ and a simple zero at $\infty^{(k)}\in\mathcal{R}_{k}$, with the normalization
\[
\psi^{(k)}(z)=z+O(1),\qquad z\rightarrow\infty^{(0)}.
\]
Here $\infty^{(k)}$ denotes the point at infinity on the sheet $\mathcal R_k$. Let $\psi_{j}^{(k)}$ denote the
restriction of $\psi^{(k)}$ to the sheet $\mathcal{R}_{j}$.  The main result of \cite{AptLopRoc} asserts that
\begin{equation}\label{ratio:asy:psi}
\lim_{m\rightarrow\infty}\frac{Q_{mp+k}(z)}{Q_{mp+k-1}(z)}=:F_{k}(z)=\psi_{0}^{(k)}(z),\qquad
z\in\mathbb{C}\setminus\Delta_{1},\quad k=1,\ldots,p,
\end{equation}
recall \eqref{ratioasymp}. From Liouville's Theorem we deduce that for any pair of indices $j,k$, the function
$\psi^{(j)}-\psi^{(k)}$ is constant on $\mathcal{R}$. In particular, $F_{j}-F_{k}$ is also constant.

\smallskip From the recurrence
\[
z Q_{mp+k}=Q_{mp+k+1}+a_{mp+k,mp+k}Q_{mp+k}+\cdots+a_{mp+k,mp+k-p} Q_{mp+k-p},
\]
dividing by $Q_{mp+k}$ and taking the limit as $m\to \infty$, we deduce that for each $k\in\{0,\ldots,p-1\}$,
\begin{equation}\label{eq:relFb}
z=F_{k+1}+b_{k,k}+b_{k,k-1}\frac{1}{F_{k}}+b_{k,k-2}\frac{1}{F_{k}\,F_{k-1}} +\cdots+b_{k,k-p}\frac{1}{F_{k}\cdots
F_{k-p+1}},
\end{equation}
with the obvious identifications $b_{k,k-i}=b_{p+k,p+k-i}$ if $i>k$ and $F_{k-i}=F_{p+k-i}$ if $i\geq k$
(cf.~\eqref{ratioasymp} and \eqref{def:limitreccoeff}). Multiplying both sides of \eqref{eq:relFb} by $F_{k}\cdots
F_{k-p+1}$, we obtain
\begin{equation}\label{eq:aux3}
z F_{k}\cdots F_{k-p+1}=F_{k+1}F_{k}\cdots F_{k-p+1}+b_{k,k}F_{k}\cdots F_{k-p+1}+\cdots
+b_{k,k-p+1}F_{k+1}+b_{k,k-p}.
\end{equation}
Except for the term $b_{k,k-p}$, all the other terms in \eqref{eq:aux3} contain a factor $F_{k+1}\equiv F_{k-p+1}$.
Writing all the functions $F_{j}$, $j\neq k+1$, as $F_{k+1}$ plus some constant, we arrive at an algebraic equation of
degree $p+1$ satisfied by $F_{k+1}(z)=\psi_{0}^{(k+1)}(z)$ of the form
\[
F_{k+1}(z)^{p+1}+C_{p}(z)\, F_{k+1}(z)^{p}+\cdots+C_{1}(z)\, F_{k+1}(z)+b_{k,k-p}=0.
\]
By analytic continuation, this equation is also satisfied by the other branches $\psi_{i}^{(k+1)}$, $i=1,\ldots,p$. It
follows that
\begin{equation}\label{equation:psi:k}
b_{k,k-p}\equiv (-1)^{p+1}\prod_{i=0}^{p}\psi_{i}^{(k+1)}(z),\qquad z\in\overline{\mathbb{C}}.
\end{equation}
In particular we have that $f_{p}\neq 0$, see \eqref{def:fp}. This allows us to write \eqref{Widom:Pn}.
\smallskip

Recall the strong asymptotics in \eqref{strongasy:Pn}. We remark that the function $\det F^{[p,k+1]}(z_{1}(x),x)$ in
\eqref{strongasy:Pn} has at most a finite number of zeros outside $\Gamma_{1}$. Indeed, the contrary would mean that
$\det F^{[p,k+1]}(z_{1}(x),x)$ is identically zero as a function of $x$, which by analytic continuation would imply
that \emph{each} of the coefficients in Widom's formula \eqref{Widom:Pn} for $P_{mp+k}(x)$ is identically zero. But
then $P_{mp+k}(x)\equiv 0$, which is clearly a contradiction; see also \cite[Lemma 5.7]{Del}.

From \eqref{strongasy:Pn} we deduce that
\begin{equation}\label{ratiolim1}
\lim_{n\rightarrow\infty}\frac{P_{n}(x)}{P_{n+p}(x)}=z_1(x)
\end{equation}
and
\begin{equation}\label{ratiolim2}
\lim_{m\rightarrow\infty}\frac{P_{mp+k}(x)}{P_{mp}(x)} = (-1)^{k}\,\frac{\det F^{[p,k+1]}(z_{1}(x),x)}{\det
F^{[p,1]}(z_{1}(x),x)},\qquad k=0,\ldots,p-1,
\end{equation}
uniformly on each compact subset of $\cee\setminus(\Gamma_1\cup\mathcal A)$ with $\mathcal A$ a finite set.\\

We want to relate the ratio asymptotics in \eqref{ratiolim2} and \eqref{ratio:asy:psi}. Recall that all the zeros of
all the polynomials $Q_n$ are contained in the interval $\Delta_1$, and that the zeros of $Q_n$ and $Q_{n+1}$ interlace
(see \cite[Theorem 2.1]{AptLopRoc}). We can then apply \cite[Section 4]{BDK} (taking into account that we do not know
yet that $\Gamma_1\subset\er$) and obtain for all $x\in\cee$ sufficiently large that
$$ \lim_{n\to\infty} \frac{Q_n(x)}{Q_{n+p}(x)}=z_1(x).
$$
Moreover, \cite[Remark 4.2]{BDK} shows that the vector
$$ \mathbf{Q}(x) :=  \lim_{m\to\infty}
\frac{1}{Q_{mp}(x)}\begin{pmatrix}Q_{mp}(x)\\ \vdots \\ Q_{mp+p-1}(x)\end{pmatrix}$$ satisfies the matrix-vector
product relation
\begin{equation*}
F(z_1(x),x)\mathbf{Q}(x)=\mathbf{0},
\end{equation*}
for all $x\in\cee$ sufficiently large. From the interlacing of zeros we know that all the limiting ratios
$\lim_{m\to\infty} Q_{mp+k}(x)/Q_{mp}(x)$ are analytic in $\cee\setminus\Delta_1$. Thus we get by analytic continuation
that
\begin{equation}\label{cramer0} \lim_{n\to\infty}
\frac{Q_n(x)}{Q_{n+p}(x)}=\til z_1(x),\qquad x\in\cee\setminus\Delta_1,
\end{equation}
and
\begin{equation}\label{cramer1}
F(\til z_1(x),x)\mathbf{Q}(x)=\mathbf{0},\qquad x\in\cee\setminus\Delta_1,
\end{equation}
where $\til z_1(x)$ is the (unique) root of the algebraic equation $f(z,x)=0$ that satisfies $\til z_1(x) \sim x^{-p}$
for $x\to\infty$ and that depends analytically on $x\in\cee\setminus\Delta_1$. Obviously $\til z_1(x)=z_1(x)$ for all
$x$ large enough. We will see further that $\til z_1(x)=z_1(x)$ holds for \emph{all} $x\in\cee\setminus\Delta_1$.

From the above discussion we see in particular that the root $z_1(x)$ can be analytically continued from a neighborhood
of infinity to $\cee\setminus\Delta_1$.

Applying Cramer's rule to \eqref{cramer1} gives us
\begin{align}
\lim_{m\rightarrow\infty}\frac{Q_{mp+k}(x)}{Q_{mp}(x)} & =(-1)^{k}\,\frac{\det F^{[p,k+1]}(\til z_{1}(x),x)}{\det
F^{[p,1]}(\til z_{1}(x),x)},\qquad x\in\cee\setminus\Delta_1, \label{ratiolim1:Q}
\end{align}
for all  $k=0,\ldots,p-1$. Comparing this with \eqref{ratiolim1}--\eqref{ratiolim2}, we see that the proof of
\eqref{ratio:asy:PQ} will be obtained once we know that $\til z_1(x)=z_1(x)$ for all $x\in\cee\setminus\Delta_1$.

Incidentally, we point out that an alternative proof of \eqref{cramer0} and \eqref{ratiolim1:Q} could be constructed
with the help of the generalized Poincar\'e theorem in \cite{MN}, rather than using \cite{BDK}.
\smallskip

From \eqref{ratio:asy:psi} and \eqref{cramer0} we see that
\begin{equation}\label{ztilde:product} \til z_1(x) = \frac{1}{\psi_0^{(1)}(x)\ldots
\psi_0^{(p)}(x)}= \frac{1}{F_{1}(x)\ldots F_{p}(x)},\qquad x\in\cee\setminus\Delta_1.
\end{equation}
The right hand side can be analytically extended to a meromorphic function on the Riemann surface $\err$, with
restriction to the sheet $\err_{k-1}$ given by
\begin{equation}\label{ztilde:product:bis} \til z_k(x) := \frac{1}{\psi_{k-1}^{(1)}(x)\ldots
\psi_{k-1}^{(p)}(x)},\qquad x\in\cee\setminus(\Delta_{k-1}\cup\Delta_{k}),
\end{equation}
for $k=1,\ldots,p+1$, where we understand that $\Delta_0=\emptyset=\Delta_{p+1}$. Thus $\til z_k(x)$ and $\til
z_{k+1}(x)$ are each others analytic continuation across $\Delta_k$. From the uniqueness of analytic continuation we
then get that for each $x\in\cee$, $\til z_1(x),\ldots,\til z_{p+1}(x)$ are a permutation of the roots
$z_1(x),\ldots,z_{p+1}(x)$ of the algebraic equation $f(z,x)=0$. If we would be able to prove that
\begin{equation}\label{ordering:roots:tilde}
|\til z_{1}(x)|\leq |\til z_{2}(x)|\leq \cdots\leq |\til z_{p+1}(x)|,\qquad x\in\mathbb{C},
\end{equation}
and \begin{equation}\label{ordering:roots:strict} |\til z_k(x)|<|\til z_{k+1}(x)|, \qquad
x\in\cee\setminus\Delta_k,\end{equation} then we could conclude from \eqref{ordering:roots} that $\til z_k(x)=z_k(x)$
for all $x\in\cee\setminus\left(\Delta_{k-1}\cup\Delta_k\right)$.
\\

The rest of the proof is devoted to proving \eqref{ordering:roots:tilde}--\eqref{ordering:roots:strict}. We will do
this by a rather intricate argument involving the total masses of certain equilibrium measures. An alternative proof
may be found in \cite[Propositions 2.1--2.2]{Apt}.

To start, we define a measure $s_k$ on $\Delta_k$ with density
\begin{equation}\label{rho:0} \ud s_k(x)=\frac{1}{2\pi i}\left( \frac{\til
z_{k,+}'(x)}{\til z_{k,+}(x)}- \frac{\til z_{k,-}'(x)}{\til z_{k,-}(x)} \right) \ud x,\qquad x\in\Delta_k,
\end{equation}
$k=1,\ldots,p$, where the prime denotes the derivative with respect to $x$, and where the $+$ and $-$ subscripts stand
for the boundary values obtained from the upper or lower half of the complex plane respectively. Note that the density
\eqref{rho:0} is well-defined in the interior of $\Delta_k$, and blows up at worst like an inverse square root at the
endpoints of $\Delta_k$ (this follows from the representation of $\til z_{k}$ in local coordinates near the endpoints).
We claim that $s_k$ is a real-valued (possibly signed) measure on $\Delta_k$ with total mass
\begin{equation}\label{rho:1} s_k(\Delta_k) := \int_{\Delta_k} \ud s_k(x)
= p+1-k,\qquad k=1,\ldots,p.
\end{equation}

To prove Equation~\eqref{rho:1}, we first derive the following relation for the Cauchy transforms:
\begin{equation}\label{rho:2} \int_{\Delta_{k-1}} \frac{1}{x-t} \ud s_{k-1}(t)
-\int_{\Delta_k} \frac{1}{x-t} \ud s_k(t) = \frac{\til z_k'(x)}{\til z_k(x)},\qquad
x\in\cee\setminus(\Delta_{k-1}\cup\Delta_k),
\end{equation}
for $k=1,\ldots,p$, where we understand that $\Delta_0=\emptyset$ and $s_0=0$. Indeed, equation \eqref{rho:2} follows
easily by contour deformation and by using that $\til z_{k-1,\pm}(x)=\til z_{k,\mp}(x)$ for $x\in\Delta_{k-1}$.

From
\begin{align*}
\til z_{1}(x) \sim x^{-p},& \qquad x\rightarrow\infty,\\ \til z_k(x)\sim C_k x & ,\qquad x\rightarrow\infty,\quad 2\leq
k\leq p+1,
\end{align*}
we get for the logarithmic derivatives that
$$ \frac{\til z_k'(x)}{\til
z_k(x)} \sim \left\{\begin{array}{ll} -p/x,& \quad k=1,\\ 1/x,& \quad k=2,\ldots,p,\end{array}\right.
$$
for $x\to\infty$. Thus by equating the $1/x$ terms in the asymptotics for $x\to\infty$ in both sides of \eqref{rho:2}
we obtain
$$ s_{k-1}(\Delta_{k-1})-s_k(\Delta_k) = \left\{\begin{array}{ll} -p,& \quad k=1,\\ 1,& \quad
k=2,\ldots,p.\end{array} \right.
$$
The claim \eqref{rho:1} follows from this by means of an upward induction on $k=1,\ldots,p$.
\smallskip

The symmetry under complex conjugation shows that $\til z_k(\bar x)=\overline{\til z_k(x)}$ where the bar denotes the
complex conjugation. We then obtain from \eqref{rho:0}--\eqref{rho:1} that
\begin{equation}\label{totalmass:1} \frac{1}{\pi}\int_{\Delta_k}
\Im\left(\frac{\til z_{k,+}'(x)}{\til z_{k,+}(x)}\right) \ud x = p+1-k,
\end{equation}
with $\Im$ denoting the imaginary part of a complex number.
\\

On the other hand, consider the (positive) measure $\sigma_k$ on $\Gamma_k$ with density \cite{Del}
\begin{equation}\label{sigma:density}
\ud\sigma_k(x)=\frac{1}{2\pi i}\sum_{j=1}^{k}\left( \frac{z_{j,+}'(x)}{z_{j,+}(x)}- \frac{z_{j,-}'(x)}{z_{j,-}(x)}
\right)\ud x,\qquad x\in\Gamma_{k}.
\end{equation}
This measure is well-defined on each open analytic arc of $\Gamma_k$, recalling that $\Gamma_k$ is a finite union of
analytic arcs. Note that we are using the roots $z_j$ and not $\til z_j$. The measure $\sigma_{k}$, restricted to the
real line, takes the form
\begin{equation}\label{rho:3}
\ud\sigma_{k}(x)=\frac{1}{\pi}\sum_{j=1}^{k}\Im\left( \frac{z_{j,+}'(x)}{z_{j,+}(x)}\right)\ud x,\qquad
x\in\Gamma_k\cap\mathbb{R}.
\end{equation}

Now, let us take a fixed open interval $J\subset\er$ that does not contain any intersection points or endpoints of the
analytic arcs constituting $\Gamma_k$, for every $k$. We also ask $J$ not to contain isolated intersection points of
the sets $\Gamma_{k}$ with the real axis. Thus there exists an open connected set $U\subset\cee$ such that $U\cap
\er=J$ and moreover $U\cap\Gamma_k$ is either empty or equal to $J$, for any $k=1,\ldots,p$. The boundary values
$z_{k,+}(x)$ for $x\in J$ are then uniquely defined and they vary analytically with $x$.

On the interval $J$, there exist indices $ 1\leq m_1<m_2<\ldots <m_L\leq p$ such that
\begin{multline}\label{clusters} |z_{1,+}(x)|=\ldots =
|z_{m_1,+}(x)|<|z_{m_1+1,+}(x)|=\ldots = |z_{m_2,+}(x)|<\ldots
\\ <|z_{m_L+1,+}(x)|=\ldots = |z_{p+1,+}(x)|,
\end{multline}
for all $x\in J$. We define $m_0:=0$ and $m_{L+1}:=p+1$.

We will see later that $m_{k+1}-m_k\in\{1,2\}$ for all $k$, i.e., each ``cluster" $|z_{m_k+1,+}(x)|=\ldots =
|z_{m_{k+1},+}(x)|$ in \eqref{clusters} can only have length 1 or 2.

From the ordering \eqref{ordering:roots} we find for each of the clusters $|z_{m_k+1,+}(x)|=\ldots =
|z_{m_{k+1},+}(x)|$ in \eqref{clusters} that
$$ \left[\frac{\ud}{\ud\epsilon}\left(\Re\log z_{m_k+1}(x+i\epsilon) \right)\right]_{\epsilon=0+}\leq
\ldots \leq \left[\frac{\ud}{\ud\epsilon}\left(\Re\log z_{m_{k+1}}(x+i\epsilon)\right)\right]_{\epsilon=0+},\qquad x\in
J,
$$
for any $k=0,\ldots,L$. Using the Cauchy-Riemann equations this implies
\begin{equation}\label{pairing:1} \Im\left( \frac{z_{m_k+1,+}'(x)}{z_{m_k+1,+}(x)} \right)\geq \ldots \geq
\Im\left( \frac{z_{m_{k+1},+}'(x)}{z_{m_{k+1},+}(x)} \right),\qquad x\in J,
\end{equation}
$k=0,\ldots,L$.

By the symmetry with respect to complex conjugation we have a pairing of the numbers in \eqref{pairing:1}: they appear
in positive-negative pairs in the sense that \begin{equation}\label{pairing:2} \Im\left(
\frac{z_{m_k+j,+}'(x)}{z_{m_k+j,+}(x)} \right) = -\Im\left( \frac{z_{m_{k+1}+1-j,+}'(x)}{z_{m_{k+1}+1-j,+}(x)}
\right),\qquad j=1,\ldots,m_{k+1}-m_k,
\end{equation}
for any $x\in J$ and $k=0,\ldots,L$. Note that the numbers in \eqref{pairing:1} can also be identically zero, in the
case of a real-valued root. Clearly there is at least one real root if $m_{k+1}-m_k$ is odd.

Let $\til m_k:=\left\lfloor \frac{m_k+m_{k+1}}{2}\right\rfloor$. From the above considerations we find for the density
of the measure $\sigma_{\til m_k}$ in \eqref{rho:3} that
\begin{eqnarray*} \frac{\ud\sigma_{\til m_k}(x)}{\ud x} &=& \frac{1}{\pi}\sum_{j=m_k+1}^{\til
m_k} \Im\left( \frac{z_{j,+}'(x)}{z_{j,+}(x)} \right)\\ &=& \frac{1}{2\pi}\sum_{j=m_k+1}^{m_{k+1}} \left|\Im\left(
\frac{z_{j,+}'(x)}{z_{j,+}(x)} \right)\right|,\qquad x\in J,
\end{eqnarray*}
for any $k=0,\ldots,L$, where the first equality follows from the cancelations arising from \eqref{pairing:2}, and the
second equality follows from \eqref{pairing:1}--\eqref{pairing:2}. Hence the sum of the total masses of all the
measures $\sigma_k$ over the interval $J$ can be bounded from below by
\begin{eqnarray}\label{somesigmas} \sigma_{1}(J)+\ldots+\sigma_{p}(J) &\geq & \sigma_{\til
m_0}(J)+\ldots+\sigma_{\til m_L}(J)
\\ \nonumber &=& \frac{1}{2\pi}\int_{J}\sum_{j=1}^{p+1} \left|\Im\left(
\frac{z_{j,+}'(x)}{z_{j,+}(x)} \right)\right|\ud x.
\end{eqnarray}
Putting $\Delta:=\bigcup_{k=1}^p \Delta_k$ we then obtain
\begin{equation}\label{pairing:3}
\sigma_{1}(\Delta)+\ldots+\sigma_{p}(\Delta) \geq \frac{1}{2\pi}\int_{\Delta}\sum_{j=1}^{p+1} \left|\Im\left(
\frac{z_{j,+}'(x)}{z_{j,+}(x)} \right)\right|\ud x.
\end{equation}
On the other hand, the (positive) measure $\sigma_k$ satisfies \cite[Cor.~4.2]{Del} (or see the proof of
\eqref{rho:1})
\begin{equation}\label{Gammak:er} \sigma_k(\Delta)\leq \sigma_{k}(\cee) =
\sigma_{k}(\Gamma_k) = p+1-k,
\end{equation}
for $k=1,\ldots,p$. Using this in \eqref{pairing:3} we get
\begin{eqnarray*}
\sum_{k=1}^{p} (p+1-k) & \geq & \sum_{k=1}^{p}\sigma_k(\Delta)\\ &\geq & \frac{1}{2\pi}\int_{\Delta}\sum_{k=1}^{p+1}
\left|\Im\left( \frac{z_{k,+}'(x)}{z_{k,+}(x)} \right)\right|\ud x\\ &= & \frac{1}{2\pi}\int_{\Delta}\sum_{k=1}^{p+1}
\left|\Im\left( \frac{\til z_{k,+}'(x)}{\til z_{k,+}(x)} \right)\right|\ud x\\ &= & \frac{1}{\pi}\sum_{k=1}^{p}
\int_{\Delta_k}\left|\Im\left( \frac{\til z_{k,+}'(x)}{\til z_{k,+}(x)} \right)\right|\ud x\\ &\geq& \sum_{k=1}^{p}
(p+1-k),
\end{eqnarray*}
where the second relation is \eqref{pairing:3}, and the third one follows since the roots $\til z_k$ form a permutation
of the roots $z_k$. The fourth relation uses that on $\Delta_k$ there are precisely two non-real roots $\til z_k(x)$
and $\til z_{k+1}(x)$, which are complex conjugated. Finally the last inequality is \eqref{totalmass:1}.

From the above chain of inequalities we find:

\begin{lemma}\label{lemma:chain}
\begin{enumerate}
\item[$(a)$] We have
\begin{equation}\label{Gammak:Delta}\Gamma_k\subset\Delta=\bigcup_{j=1}^{p}\Delta_j,\qquad
k=1,\ldots,p.\end{equation} \item[$(b)$] Clusters of length $\geq 3$ in \eqref{clusters} cannot occur unless they
have all entries in
    \eqref{pairing:1} equal to zero.
\end{enumerate}
\end{lemma}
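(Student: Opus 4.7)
The plan is to read both assertions directly off the long chain of (in)equalities in the display immediately preceding the lemma. Since this chain starts and ends with the quantity $\sum_{k=1}^p(p+1-k)$, every individual inequality in it must collapse to an equality. Assertion (a) will come from the first link (where \eqref{Gammak:er} is invoked), and assertion (b) from the link involving \eqref{pairing:3} (which in turn rests on \eqref{somesigmas}).

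For (a), equality in \eqref{Gammak:er} gives $\sigma_k(\Delta)=\sigma_k(\Gamma_k)=p+1-k$ for each $k=1,\ldots,p$, so the positive measure $\sigma_k$ places no mass on $\Gamma_k\setminus\Delta$. Since $\Gamma_k$ is a finite union of open analytic arcs and the density \eqref{sigma:density} of $\sigma_k$ is a nontrivial real-analytic function on each such arc (reflecting the genuine monodromy of the roots $z_j(x)$ across $\Gamma_k$), no such arc can lie outside $\Delta$, which yields \eqref{Gammak:Delta}.

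For (b), since the chain-of-equalities must hold piecewise, restricting to any fixed open interval $J\subset\mathbb{R}$ of the type selected before \eqref{clusters} makes \eqref{somesigmas} sharp:
\[
\sum_{k\notin\{\til m_0,\ldots,\til m_L\}}\sigma_k(J)=0.
\]
As each $\sigma_k$ is positive, its density on $J$ vanishes for every index $k$ outside the midpoint set. Writing that density as $\frac{1}{\pi}\sum_{j=1}^k\Im(z_{j,+}'/z_{j,+})$ and using that each complete cluster sums to zero by the pairing \eqref{pairing:2}, the density reduces to a partial-cluster sum $a_1+\cdots+a_r$, where $a_1\geq\cdots\geq a_\ell$ denote the entries of \eqref{pairing:1} inside the relevant cluster and satisfy $a_i+a_{\ell+1-i}=0$. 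For any cluster of length $\ell\geq 3$ at least one non-midpoint index inside the cluster produces a partial sum equal to $a_1$ (or $a_1+a_2$, etc., as a brief case check on $\ell=3,4,5,\ldots$ confirms); its vanishing forces $a_1=0$, and then the monotonicity and antisymmetry propagate to $a_1=\cdots=a_\ell=0$.

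The step I expect to require the most care is ruling out ``ghost'' analytic arcs of $\Gamma_k$ lying outside $\Delta$ on which $\sigma_k$ happens to have density identically zero in part (a). I anticipate handling this by a local Puiseux analysis of the roots $z_j(x)$ at a generic point of such an arc; alternatively, one can temporarily replace $\Gamma_k$ in \eqref{Gammak:er} by $\supp\sigma_k$, so that $\sigma_k(\Delta)=\sigma_k(\Gamma_k)$ directly gives $\supp\sigma_k\subset\Delta$, and then separately identify $\supp\sigma_k$ with $\Gamma_k$.
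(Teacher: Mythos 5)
Your proposal takes essentially the same route as the paper, and the logic is sound. For part $(a)$, the step you flag as delicate — ruling out arcs of $\Gamma_k$ on which $\sigma_k$ has identically vanishing density — is precisely where the paper inserts a citation-based argument: by \cite[Prop.~3.4--3.5]{DK} the product $z_1(x)\cdots z_k(x)$ is analytic in $\cee\setminus\Gamma_k$ and its continuation across any sub-arc of $\Gamma_k$ is a \emph{different} product $z_{j_1}\cdots z_{j_k}$, so $\log|z_1\cdots z_k|$ is nowhere harmonic on $\Gamma_k$; combined with \cite[Prop.~5.10]{Del}, which identifies $-\log|z_1\cdots z_k|$ (up to a constant) as the logarithmic potential of $\sigma_k$, this gives the nontriviality of $\sigma_k$ on every sub-arc. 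This is a cleaner resolution than a local Puiseux analysis and avoids having to separately show $\supp\sigma_k=\Gamma_k$; you should plug it in at the point you identified. For part $(b)$, the paper simply asserts that equality in \eqref{somesigmas} ``is easily seen to imply Part (b)'', whereas you spell out the mechanism — the partial sums $a_1+\cdots+a_r$ for non-midpoint indices $r$ must vanish, the pairing gives $a_i+a_{\ell+1-i}=0$, and for $\ell\geq 3$ one of these vanishing partial sums forces $a_1=0$ and hence all $a_j=0$ by monotonicity. Your case check (for $\ell=3$ the index $r=1$ is the midpoint, so one uses $r=2$ together with $a_2=0$) is the only subtlety and you handle it correctly. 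Net: same proof, with your $(b)$ supplying detail the paper elides and your $(a)$ needing the \cite{DK}/\cite{Del} input to close the gap you yourself noted.
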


\begin{proof}[Proof of Lemma~\ref{lemma:chain}]
(a) By comparing the two outermost terms in the above chain of inequalities, we see that each of the inequalities
$\geq$ is actually an equality. Since the first inequality in the chain comes from \eqref{Gammak:er}, this implies in
particular that $\sigma_k(\Delta)=\sigma_k(\cee)=\sigma_k(\Gamma_k)=p+1-k$ for all $k$. This implies
\eqref{Gammak:Delta}, since each analytic arc of $\Gamma_k$ has a strictly positive mass under $\sigma_k$.

Let us prove the statement made in the last sentence. We note that the product $z_1(x)\ldots z_k(x)$ is analytic in
$\cee\setminus\Gamma_k$ \cite[Prop.~3.5]{DK} and moreover, by splitting the analytic arcs of $\Gamma_k$ into finitely
many sub-arcs if necessary, the analytic continuation of $z_1(x)\ldots z_k(x)$ across each such sub-arc is of the form
$z_{j_1}(x)\ldots z_{j_k}(x)$ with $\{j_1,\ldots,j_k\}\neq\{1,\ldots,k\}$ \cite[Prop.~3.4]{DK}. This then implies that
$\log |z_1(x)\ldots z_k(x)|$ is nowhere harmonic on $\Gamma_k$. Since $-\log|z_1(x)\ldots z_k(x)|+C$ (for some constant
$C$) is the logarithmic potential of the measure $\sigma_k$ in \eqref{sigma:density} \cite[Prop.~5.10]{Del}, we then
conclude from the non-harmonicity that $\sigma_k$ is indeed nontrivial on each sub-arc of $\Gamma_k$. This proves the
claim.

(b) From the strictness of the inequalities $\geq$ in the above chain of inequalities, we also get that the inequality
$\geq$ in \eqref{somesigmas} is an equality. This is easily seen to imply Part (b).
\end{proof}

Now we are in a position to prove \eqref{GammaDelta}. First we will prove that $\Gamma_1=\Delta_1$. From
\eqref{Gammak:Delta} we already know that $\Gamma_1\subset\Delta$. Then the equality $\til z_1(x)=z_1(x)$, which is
known to hold for all $x$ sufficiently large, remains valid for all $x\in\cee\setminus\Delta$. Using this, we find
$$ z_{1,+}(x)=\til z_{1,+}(x) = \til z_{1,-}(x) = z_{1,-}(x),\qquad
x\in\Delta\setminus\Delta_1,
$$
where we used that $\til z_1(x)$ is analytic away from $\Delta_{1}$. Hence $z_1(x)$ is analytic on
$\Delta\setminus\Delta_1$ and then \cite[Prop.~3.4]{DK} implies that $(\Delta\setminus \Delta_{1})\cap
\Gamma_1=\emptyset$ and so $\Gamma_1\subset\Delta_1$. The reverse inclusion $\Delta_1\subset\Gamma_1$ is immediate
since $z_{1,+}(x)=\til z_{1,+}(x)=\overline{\til z_{1,-}(x)}=\overline{z_{1,-}(x)}$ for $x\in\Delta_1$, yielding two
different roots with the same modulus. This proves that $\Gamma_1=\Delta_1$.

From the non-triviality of $\sigma_1$ on $\Gamma_1=\Delta_1$ (see the proof of Lemma~\ref{lemma:chain}(a)), we have
that $\Im\left( \frac{z_{1,+}'(x)}{z_{1,+}(x)}\right)\neq 0$ for almost all $x\in\Delta_1$. Thus the cluster $\Im\left(
\frac{z_{1,+}'(x)}{z_{1,+}(x)} \right)\geq \ldots \geq \Im\left( \frac{z_{m_{1},+}'(x)}{z_{m_{1},+}(x)} \right)$ on the
interval $\Delta_1$ has length $m_1\leq 2$, in view of Lemma~\ref{lemma:chain}(b). On the other hand, we also have that
$m_1\geq 2$, since $\til z_{1,\pm}(x)=\til z_{2,\mp}(x)$ are complex conjugate for $x\in\Delta_1$. Thus $m_1=2$. This
also shows that $z_2(x)=\til z_2(x)$ for all $x$ near $\Delta_1$ and therefore for all $x\in\cee\setminus\Delta$, since
$\Gamma_2\subset\Delta$ (again from \eqref{Gammak:Delta}).

Now we prove that $\Gamma_2=\Delta_2$. In view of the last paragraph we have
$$ z_{1,+}(x)z_{2,+}(x)=\til z_{1,+}(x)\til z_{2,+}(x) = \til z_{1,-}(x)\til z_{2,-}(x) =
z_{1,-}(x)z_{2,-}(x),\qquad x\in\Delta\setminus\Delta_2,
$$
where we used that the product $\til z_1\til z_2$ is analytic away from $\Delta_2$. Thus $z_1z_2$ is analytic on
$\Delta\setminus \Delta_{2}$ and so \cite[Prop.~3.4]{DK} implies again that $(\Delta\setminus \Delta_{2})\cap
\Gamma_2=\emptyset$ and therefore $\Gamma_2\subset\Delta_2$. The reverse inclusion $\Delta_2\subset\Gamma_2$ follows
again since $z_{2,+}(x)=\til z_{2,+}(x)=\overline{\til z_{2,-}(x)}=\overline{z_{2,-}(x)}$ for $x\in\Delta_2$, yielding
two different roots with the same modulus. This proves that $\Gamma_2=\Delta_2$.

This reasoning can be extended to show that $\Gamma_k=\Delta_k$ for each $k=3,\ldots,p$. In particular, we have that
$\til z_k(x)=z_k(x)$ for $x\in\cee\setminus(\Delta_{k-1}\cup\Delta_k)$. This concludes the proof of
Prop.~\ref{prop:QP:samelimit}. $\bol$

\section{Proof of Theorem~\ref{theorem:main:ortho}}
\label{section:proof:main:ortho}

Using \eqref{ratio:asy:PQ} and a telescoping product we deduce that
\[
\lim_{m\rightarrow\infty}\frac{P_{mp+k-j}(x)}{P_{mp+k}(x)}  =\frac{1}{F_{k}(x) F_{k-1}(x) \cdots F_{k-j+1}(x)},\qquad
j=1,\ldots,p,\quad k=0,\ldots,p-1,
\]
where each $F_j$ is understood as $F_{j\,\textrm{mod}\, p}$. It follows that for $n=mp+k$, $0\leq k\leq p-1$, the
vector \eqref{HPapprox} is a Hermite-Pad\'{e} approximant to the system of functions
\[
\mathbf{G}^{(k)}=(G_{1}^{(k)},G_{2}^{(k)},\ldots,G_{p}^{(k)}),
\]
where
\begin{equation}\label{def:Gjk}
G_{j}^{(k)}(x)=\frac{1}{F_{k}(x)}+c_{1,j-1} \frac{1}{F_{k}(x) F_{k-1}(x)}+\cdots+c_{j-1,j-1}\frac{1}{F_{k}(x)
F_{k-1}(x)\cdots F_{k-j+1}(x)}.
\end{equation}
In particular, we obtain the following orthogonality conditions
\begin{equation}\label{eq:ortPn}
\int_{\gamma} P_{n}(x)\, x^{l} G_{j}^{(k)}(x)\ud x=0,\qquad l=0,\ldots,n_{j}-1,\quad j=1,\ldots,p,
\end{equation}
where $\gamma$ is an arbitrary closed Jordan curve surrounding $\Delta_{1}$. From \eqref{eq:ortPn} we see that the
polynomials $P_n$ satisfy certain multiple orthogonality relations, but with the weight functions $G_{j}^{(k)}(x)$
depending on the residue class $k$ of $n$ modulo $p$. To get around this issue, we will make a clever choice of the
constants $c_{i,j-1}$ in \eqref{def:Gjk}.

\begin{proposition}\label{prop:constants:cij} Let $j$ and $k$ be fixed.
The constants $c_{i,j-1}$ in \eqref{def:Gjk} can be chosen so that
\begin{equation}\label{constant:cij} G_{j}^{(k)}(x) = \frac{1}{F_{k-j+1}(x)},
\end{equation}
where again $F_{k-j+1}$ is understood as $F_{(k-j+1)\,\textrm{mod}\, p}$.
\end{proposition}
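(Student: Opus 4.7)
My plan is to recast the desired identity as a polynomial identity in a single auxiliary variable and then solve a triangular linear system for the unknown constants.

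First, I would invoke Liouville's theorem (as noted in the paragraph following \eqref{ratio:asy:psi}): the differences $F_l - F_{k-j+1}$ are all constant on $\mathcal R$. Introducing $F := F_{k-j+1}$ and $\alpha_l := F_l - F$ (so that $\alpha_{k-j+1}=0$), and multiplying \eqref{constant:cij} through by $F_k F_{k-1}\cdots F_{k-j+1}$, the identity to be proved becomes a polynomial identity in $F$ of the form
\[
\sum_{i=0}^{j-1} c_i\, h_i(F) \;=\; \prod_{m=k-j+2}^{k}(F+\alpha_m),
\]
with $c_0 := 1$, $c_i := c_{i,j-1}$ for $i\ge 1$, and
\[
h_i(F) \;:=\; \prod_{m=k-j+1}^{k-i-1}(F+\alpha_m) \qquad (\text{empty product} = 1).
\]

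The key step is then to observe that each $h_i$ is monic of degree $j-1-i$ in $F$, so the polynomials $h_0, h_1, \ldots, h_{j-1}$ have pairwise distinct degrees and therefore form a basis of the space of polynomials in $F$ of degree at most $j-1$. Since the right-hand side is itself monic of degree $j-1$, its unique expansion in the basis $\{h_i\}$ automatically produces $c_0=1$ (by matching leading coefficients, which is consistent with our convention) and determines $c_1, \ldots, c_{j-1}$ by descending back-substitution: first $c_1$ from the coefficient of $F^{j-2}$, then $c_2$ from $F^{j-3}$, and so on.

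Finally, because $F_{k-j+1}$ is a non-constant meromorphic function on $\mathcal R$, a polynomial identity in the formal symbol $F$ with constant coefficients automatically transfers to a functional identity on $\mathcal R \setminus \Delta_1$, yielding \eqref{constant:cij}. The main (and really only) obstacle I foresee is the notational bookkeeping of shifted indices; the underlying content reduces to Liouville's theorem together with the linear independence of monic polynomials of distinct degrees.
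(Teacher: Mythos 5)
Your proof is correct, and it takes a genuinely different route from the paper's. The paper evaluates $G_{j}^{(k)}$ by a right-to-left telescoping induction: it defines partial sums $y_{l}$ starting from the innermost term $c_{j-1,j-1}/(F_k\cdots F_{k-j+1})$, and at each step combines the next term over a common denominator and uses $F_{k-j+l}-F_{k-j+1}=K_{k-j+l}-K_{k-j+1}$ to collapse the numerator, choosing $c_{j-l+1,j-1}=c_{j-l,j-1}(K_{k-j+l}-K_{k-j+1})$ so the simplification goes through. Your approach instead clears denominators in one stroke, treats $F:=F_{k-j+1}$ as an indeterminate (justified by Liouville, exactly as the paper uses), and reduces the claim to the linear-algebraic fact that the monic polynomials $h_{0},\dots,h_{j-1}$ of distinct degrees form a triangular basis of $\cee[F]_{\leq j-1}$. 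This buys a more conceptual existence-and-uniqueness statement for the $c_{i,j-1}$ at once, rather than constructing them term by term; the paper's recursion, conversely, yields an explicit closed-form product for each $c_{i,j-1}$ (namely $c_{i,j-1}=\prod_{t=0}^{i-1}(K_{k-t}-K_{k-j+1})$), which is actually used later via \eqref{formula:clj}. One small point worth making explicit in your write-up: after establishing the polynomial identity, you divide back by $F_k\cdots F_{k-j+1}$, which is legitimate because these are non-identically-zero meromorphic functions; the resulting identity for $G_j^{(k)}$ then holds away from the (discrete) zero set.
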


\begin{proof} The function $F_k(x)$ in \eqref{ratioasymp} has
a simple pole at $x=\infty$, with
\begin{equation}\label{def:Kj}
K_{k} := \lim_{x\to \infty} F_k(x) -x=-b_{k-1,k-1},
\end{equation}
by virtue of \eqref{eq:relFb} (with $z$ replaced by $x$). As already mentioned, Liouville's theorem implies that
$\psi^{(j)}- \psi^{(k)}$ is constant on $\mathcal{R}$, for any $j,k\in\{1,\ldots,p\}$. Combined with \eqref{def:Kj} and
\eqref{ratio:asy:psi} this yields
\begin{equation}\label{constants:Ki} F_j(x) -  F_k(x) =
\psi_0^{(j)}(x)- \psi_0^{(k)}(x) \equiv K_{j} -K_{k},\quad x \in \overline{\mathbb{C}} \setminus
\Delta_1.\end{equation}

Now let $j$ and $k$ be fixed. We will evaluate the sum \eqref{def:Gjk} from right to left. Formally, we do this by
defining functions $y_l(x)$, $l=0,\ldots,j$ with $y_0(x)=0$ and then recursively by
\begin{equation}\label{yl:def} y_l(x) = y_{l-1}(x)+c_{j-l,j-1}
\frac{1}{F_{k}(x) F_{k-1}(x)\cdots F_{k-j+l}(x)},
\end{equation} for $l=1,\ldots,j$, with $c_{0,j-1}:=1$. So $y_l(x)$ is the sum of the last $l$ terms in
\eqref{def:Gjk}, and in particular $y_{j}(x)=G_j^{(k)}(x)$.

We will prove by induction on $l$ that the constants $c_{j-l,j-1}$ can be chosen so that
\begin{equation}\label{constants:cij:aux} y_l(x)=
c_{j-l,j-1}\frac{1}{F_{k}(x) F_{k-1}(x)\cdots F_{k-j+l+1}(x)F_{k-j+1}(x)},
\end{equation}
for any $l=1,\ldots,j$. By taking $l=j$ this proves the proposition.

It remains to prove \eqref{constants:cij:aux}. For $l=1$ this is trivial. Now assume by induction that
\eqref{constants:cij:aux} holds with $l$ replaced by $l-1\in\{1,\ldots,j-1\}$. From the definition \eqref{yl:def} we
then get
$$ y_{l}(x)=
c_{j-l+1,j-1}\frac{1}{F_{k}(x) F_{k-1}(x)\cdots F_{k-j+l}(x)F_{k-j+1}(x)}+ c_{j-l,j-1}\frac{1}{F_{k}(x)
F_{k-1}(x)\cdots F_{k-j+l}(x)}.
$$
Putting on a common denominator yields
$$ y_{l}(x)=\frac{c_{j-l+1,j-1}+c_{j-l,j-1}F_{k-j+1}(x)}{F_{k}(x) F_{k-1}(x)\cdots F_{k-j+l}(x)F_{k-j+1}(x)}.
$$
Now by taking $c_{j-l+1,j-1}:=c_{j-l,j-1}(K_{k-j+l}-K_{k-j+1})$ and using \eqref{constants:Ki}, this becomes
$$ y_{l}(x)=c_{j-l,j-1}\frac{F_{k-j+l}(x)}{F_{k}(x) F_{k-1}(x)\cdots F_{k-j+l}(x)F_{k-j+1}(x)},
$$
which shows that \eqref{constants:cij:aux} holds for $l$. This proves the induction step.
\end{proof}

From \eqref{eq:ortPn} and \eqref{constant:cij}, we see that the Chebyshev-Nikishin polynomials $P_n(x)$ satisfy the
orthogonality conditions
\begin{equation}\label{eq:ortPn:bis}
\int_{\gamma} P_{n}(x)\, x^{l} \frac{1}{F_{j}(x)}\ud x=0,\qquad l=0,\ldots,n_{j}-1,\quad j=1,\ldots,p,
\end{equation}
with again $\gamma$ an arbitrary closed Jordan curve surrounding $\Delta_{1}$. Indeed, if $n=mp+k$ with
$k\in\{0,1,\ldots,p-1\}$, then \eqref{eq:ortPn:bis} can be checked  for each fixed residue class $k$ individually,
using the above considerations and the fact that the orthogonality measures with the same number of orthogonality
constraints can be freely interchanged.

By shrinking the contour $\gamma$ in \eqref{eq:ortPn:bis} to the interval $\Delta_1$ and using the Stieltjes-Perron
inversion formula, we get (see \eqref{cauchy1})
\begin{equation}\label{eq:ortPn:tris}
\int_{\Delta_1} P_{n}(x)\, x^{l} \ud\nu_j(x)=0,\qquad l=0,\ldots,n_{j}-1,\quad j=1,\ldots,p.
\end{equation}
This proves Theorem~\ref{theorem:main:ortho}. $\bol$

\section{Proof of Theorem~\ref{theorem:main:Nikishin}}
\label{section:proof:main:Nik}

The proof uses some ideas from Aptekarev-Kaliaguine-Saff \cite{AKS}, see also \cite{DelLop}.

Write $(\nu_1^{(1)},\ldots,\nu_p^{(1)}):=(\nu_1,\ldots,\nu_p)$. These measures will form the first layer of the
Nikishin hierarchy. Denote the Cauchy transform of $\nu_j^{(1)}$ by
\[ F_{j}^{(1)}(x):=1/F_{j}(x), \qquad j=1,\ldots,p,
\]
recall \eqref{cauchy1}. By the Stieltjes-Perron inversion principle, the density of $\nu_j^{(1)}$ is given by
\[
\frac{\ud \nu_j^{(1)}(x)}{\ud x} = \frac{1}{2\pi i}\left(F_{j,-}^{(1)}(x)-F_{j,+}^{(1)}(x)\right),\qquad x \in
\Delta_1,
\]
with again $\pm$ denoting the boundary values from the upper and lower half-plane respectively. With the help of
\eqref{ratio:asy:psi} this becomes
\[ \frac{\ud \nu_j^{(1)}(x)}{\ud x} = \frac{-1}{2\pi
i}\frac{\psi_{0,-}^{(j)}(x)-\psi_{0,+}^{(j)}(x)}{\psi_{0,+}^{(j)}(x)\psi_{0,-}^{(j)}(x)} = \frac{-1}{2\pi
i}\frac{\psi_{0,-}^{(j)}(x)-\psi_{0,+}^{(j)}(x)}{\psi_0^{(j)}(x)\psi_1^{(j)}(x)} = \frac{-1}{2\pi
i}\frac{\psi_{0,-}^{(1)}(x)-\psi_{0,+}^{(1)}(x)}{\psi_0^{(j)}(x)\psi_1^{(j)}(x)},
\]
where the last equality uses that, according to \eqref{constants:Ki},
\begin{equation} \label{relfund}
\psi^{(j)}(x) = \psi^{(k)}(x)+ K_j-K_k,\qquad 1\leq j,k\leq p,
\end{equation}
which implies
\[ \psi_{0,\pm}^{(j)}(x) = \psi_{0,\pm}^{(1)}(x)+ K_j-K_1.
\]
(Notice that $\psi_0^{(j)}\psi_1^{(j)} \in \mathcal{H}(\mathbb{C}\setminus \Delta_2)$, i.e., it is holomorphic in
$\mathbb{C}\setminus \Delta_2$.)

Let us prove that the ratio of densities of $\nu_j^{(1)}$  and $\nu_1^{(1)}$,
$$ F^{(2)}_{j}(x):=\frac{F_{j,-}^{(1)}(x)-F_{j,+}^{(1)}(x)}{F_{1,-}^{(1)}(x)-F_{1,+}^{(1)}(x)},\qquad x\in\Delta_1,
\qquad j=2,\ldots,p,
$$
can be analytically extended to $\cee\setminus\Delta_2$ and  can be written as a Cauchy transform
$$ F^{(2)}_{j}(x) = \int_{\Delta_2}\frac{1}{x-t}\ud\nu^{(2)}_j(t),\qquad
x\in\cee\setminus\Delta_2, \qquad j=2,\ldots,p,
$$
for some   measure $\nu^{(2)}_j$ with constant sign supported on $\Delta_2$. The measures
$(\nu^{(2)}_2,\ldots,\nu^{(2)}_p)$ form the second layer of the Nikishin hierarchy.

In fact
\[ F^{(2)}_{j}(x) = \frac{\psi_{0}^{(1)}(x)\psi_{1}^{(1)}(x)}{\psi_{0}^{(j)}(x)\psi_{1}^{(j)}(x)}, \qquad x \in
\Delta_1, \qquad j=2,\ldots,p,
\]
and thus
\begin{equation} \label{eq:F2} F^{(2)}_{j}  = \frac{\psi_{0}^{(1)} \psi_{1}^{(1)} }{\psi_{0}^{(j)} \psi_{1}^{(j)} }
\in
\mathcal{H}(\mathbb{C} \setminus \Delta_2), \qquad j=2,\ldots,p.
\end{equation}
Moreover, at infinity the numerator takes a finite value whereas the denominator has a simple pole, therefore
\[ F^{(2)}_{j}(x) = O\left(\frac{1}{x}\right), \qquad x \to \infty,
\]
having a simple zero at $\infty$ and no other zeros in $\mathbb{C} \setminus \Delta_2$.

Let $\gamma$ be a positively oriented closed Jordan curve surrounding $\Delta_2$. By Cauchy's integral formula we have
that
\[ F^{(2)}_{j}(x) = \frac{1}{2\pi i} \int_{\gamma} \frac{F^{(2)}_{j}(t)}{x-t} \ud t,
\]
for all $x$ exterior to $\gamma$. Shrinking $\gamma$ to $\Delta_2$  we get that
\[ F^{(2)}_{j}(x) = \int_{\Delta_2} \frac{\ud \nu_j^{(2)}(t)}{x-t},
\]
where
\[ \ud \nu_j^{(2)}(t) = \frac{1}{2\pi i}\left(F_{j,-}^{(2)}(t)-F_{j,+}^{(2)}(t)\right) \ud t,\qquad t \in \Delta_2.
\]
Since $\psi_0^{(k)} \in \mathcal{H}(\mathbb{C}\setminus \Delta_1), k=1,\ldots,p$, and $\Delta_1 \cap \Delta_2 =
\emptyset$, the density of $\nu^{(2)}_j$ takes the form
\[ \frac{\ud \nu^{(2)}_j(x)}{\ud x} = \frac{1}{2\pi i}\left(F_{j,-}^{(2)}(x)-F_{j,+}^{(2)}(x)\right) = \frac{1}{2\pi
i} \frac{\psi_{0}^{(1)}(x)}{\psi_{0}^{(j)}(x)} \left(\frac{\psi_{1,-}^{(1)}(x)}{\psi_{1,-}^{(j)}(x)} -
\frac{\psi_{1,+}^{(1)}(x)}{\psi_{1,+}^{(j)}(x)}\right), \quad x \in \Delta_2.
\]
On the other hand, using \eqref{relfund}, the expression in the last parentheses reduces to
\[ \frac{\psi_{1,-}^{(1)}(x)\psi_{1,+}^{(j)}(x)-\psi_{1,+}^{(1)}(x)\psi_{1,-}^{(j)}(x)}
{\psi_{1,-}^{(j)}(x)\psi_{1,+}^{(j)}(x)} = \frac{(K_j-K_1)(\psi_{1,-}^{(1)}(x)-\psi_{1,+}^{(1)}(x))
}{\psi_{1}^{(j)}(x)\psi_{2 }^{(j)}(x)}
\]
since $\psi_{1,-}^{(j)}(x)\psi_{1,+}^{(j)}(x) = \psi_{2,+}^{(j)}(x)\psi_{1,+}^{(j)}(x) =
\psi_{1,-}^{(j)}(x)\psi_{2,-}^{(j)}(x)$ can be extended analytically to a neighborhood of $\Delta_2$. Consequently,
\[ \frac{\ud \nu^{(2)}_j(x)}{\ud x} = \frac{K_j-K_1}{2\pi
i}\frac{\psi_{0}^{(1)}(x)(\psi_{1,-}^{(1)}(x)-\psi_{1,+}^{(1)}(x))}{\psi_{0}^{(j)}(x)\psi_{1
}^{(j)}(x)\psi_{2}^{(j)}(x)}, \qquad x \in \Delta_2,\qquad j=2,\ldots,p,
\]
where $\psi_{0}^{(j)}\psi_{1 }^{(j)}\psi_{2}^{(j)} \in \mathcal{H}(\mathbb{C} \setminus \Delta_3)$ and
$\psi_{0}^{(1)}\in \mathcal{H}(\mathbb{C} \setminus \Delta_1)$. Moreover, from the symmetry of these functions we
deduce that $\psi_{0}^{(1)}/\psi_{0}^{(j)}\psi_{1 }^{(j)}\psi_{2}^{(j)}$ takes real values with a constant sign on
$\Delta_2$. On the other hand,
\[ \psi_{1,-}^{(1)}(x)-\psi_{1,+}^{(1)}(x) = \psi_{1,-}^{(1)}(x)-\overline{\psi_{1,-}^{(1)}(x)} = 2i\, \Im
\{\psi_{1,-}^{(1)}(x)\}, \qquad x \in\Delta_2.\] Should $\psi_{1,-}^{(1)}(x)=0$ for some $x$ in the interior of
$\Delta_2$, we would have that at that point $\psi_{1,-}^{(1)}(x) = \psi_{1,+}^{(1)}(x)$. This is clearly impossible
because $\psi^{(1)}$ is one to one on $\mathcal{R}$ whereas $x_+$ and $x_-$ are distinct points on this Riemann
surface. Consequently for each $j=2,\ldots,p$, the measure $\nu_j^{(2)}$ has a constant sign on $\Delta_2$.

In general, fix $l\in\{2,\ldots,p\}$ and assume that we have defined measures
$(\nu^{(l-1)}_{l-1},\ldots,\nu^{(l-1)}_p)$ supported on $\Delta_{l-1}$, forming the $(l-1)$ layer of the Nikishin
hierarchy. Denote the Cauchy transform of $\nu^{(l-1)}_j$ by $F_{j}^{(l-1)}, j=l-1,\ldots,p$. Assume that we have also
shown that
\[ F_j^{(l-1)}(x) =
\frac{m_{l-2,j}}{m_{l-2,l-2}}\frac{\psi^{(l-2)}_0(x)\ldots \psi^{(l-2)}_{l-2}(x)}{\psi^{(j)}_0(x)\ldots
\psi^{(j)}_{l-2}(x)} \in \mathcal{H}(\mathbb{C} \setminus \Delta_{l-1}), \qquad (\psi_0^{(0)} \equiv 1),
\]
and
\[ \frac{\ud \nu^{(l-1)}_j(x)}{\ud x} = \frac{1}{2\pi
i}\left(F_{j,-}^{(l-1)}(x)-F_{j,+}^{(l-1)}(x)\right) =\]
\[ \frac{m_{l-1,j}}{2\pi i}
\frac{\psi^{(l-2)}_0(x)\ldots \psi^{(l-2)}_{l-3}(x)}{\psi^{(j)}_0(x)\ldots
\psi^{(j)}_{l-1}(x)}\left(\psi^{(1)}_{l-2,-}(x)-\psi^{(1)}_{l-2,+}(x)\right),
\]
for certain constants $m_{l-2,j}, m_{l-1,j} \in \mathbb{R} \setminus \{0\}$. (In fact, $m_{0,j}=1$, $j=0,\ldots,p$,
$m_{1,j} = -1$, $j=1,\ldots,p$, and $m_{2,j} = K_j - K_1, j=2,\ldots,p$.)

Then, we set
$$ F^{(l)}_j(x):=\frac{F_{j,-}^{(l-1)}(x)-F_{j,+}^{(l-1)}(x)}{F_{l-1,-}^{(l-1)}(x)-F_{l-1,+}^{(l-1)}(x)},\qquad
x\in\Delta_{l-1},
$$
for $j=l,\ldots,p$. We will see that this function can be analytically extended to $\cee\setminus\Delta_l$, and that it
is again a Cauchy transform
\begin{equation}\label{Nik:hier:0} F^{(l)}_j(x) = \int_{\Delta_l}\frac{1}{x-t}
\ud\nu^{(l)}_j(t),\qquad x\in\cee\setminus\Delta_l,
\end{equation}
for some constant sign measure $\nu^{(l)}_j$ supported on $\Delta_l$. The measures $(\nu^{(l)}_l,\ldots,\nu^{(l)}_p)$
form the $l$th layer of the Nikishin hierarchy. With this we will conclude the induction.

In fact, using the formulas from the induction hypothesis, it follows that
\[F^{(l)}_j(x)= \frac{m_{l-1,j}}{m_{l-1,l-1}}  \frac{\psi^{(l-1)}_0(x)\ldots
\psi^{(l-1)}_{l-1}(x)}{\psi^{(j)}_0(x)\ldots \psi^{(j)}_{l-1}(x)}, \qquad x \in \Delta_{l-1},
\]
which by the properties of the branches $\psi_k^{(l-1)}$ and $\psi_k^{(j)}, k=0,\ldots,l-1,$ can be extended
analytically to all $\mathbb{C} \setminus \Delta_l$. At infinity the numerator takes a finite value whereas the
denominator has a simple pole. Consequently,
\[ F^{(l)}_j(x) \in \mathcal{H}(\mathbb{C}\setminus \Delta_l), \qquad F^{(l)}_j(x) = O\left(\frac{1}{x}\right),\qquad
x
\to \infty.
\]
Therefore, using the Cauchy integral theorem, we obtain
\[ F^{(l)}_j(x) = \frac{1}{2\pi i}\int_{\gamma} \frac{F^{(l)}_j(t)}{x-t} \ud t
\]
for all $x$ exterior to $\gamma$, where $\gamma$ is a positively oriented closed Jordan curve that surrounds
$\Delta_{l}$. Shrinking $\gamma$ to $\Delta_l$, we find that \eqref{Nik:hier:0} takes place with
\begin{equation}\label{constants:cij:nik} \frac{\ud \nu^{(l)}_j(x)}{\ud x} = \frac{1}{2\pi
i}\left(F_{j,-}^{(l)}(x)-F_{j,+}^{(l)}(x)\right) = \frac{m_{l,j}}{2\pi i} \frac{\psi^{(l-1)}_0(x)\ldots
\psi^{(l-1)}_{l-2}(x)}{\psi^{(j)}_0(x)\ldots \psi^{(j)}_l(x)}\left(\psi^{(1)}_{l-1,-}(x)-\psi^{(1)}_{l-1,+}(x)\right),
\end{equation}
for $x \in \Delta_l$, with $m_{l,j} = \frac{m_{l-1,j}(K_{j}-K_{l-1})}{m_{l-1,l-1}}$. Here, we also used
\eqref{relfund}. Using the expression for $\frac{\ud \nu^{(l)}_j(x)}{\ud x}$, arguing as we did above for the case
$l=2$  one deduces that $\nu^{(l)}_j$ has constant sign on $\Delta_l$ for each $j=l,\ldots,p$. The fact that the
measures are absolutely continuous with respect to the Lebesgue measure is a direct consequence of the expression we
have found for them. This ends the proof of Theorem~\ref{theorem:main:Nikishin}. $\bol$

\begin{remark}\label{remark:diagonal:nik} Note that the generating measures $\rho_j$
in Theorem~\ref{theorem:main:Nikishin} are nothing but the ``diagonal" measures in the Nikishin hierarchy, i.e.,
$\rho_j = \nu_j^{(j)}$ given by \eqref{constants:cij:nik}.
\end{remark}

\begin{remark}\label{recursion:clj}
The recursive relation given above between the constants $m_{l,j}$ allows us to write
\begin{equation}\label{formula:clj}
m_{l,j}=\frac{\prod_{i=1}^{l-1} \left(K_{j}-K_{i}\right)}{\prod_{i=1}^{l-2} \left(K_{l-1}-K_{i}\right)}
=\frac{\prod_{i=0}^{l-2} \left(b_{i,i}-b_{j-1,j-1}\right)}{\prod_{i=0}^{l-3} \left(b_{i,i}-b_{l-2,l-2}\right)},
\end{equation}
for all $l=2,\ldots,p$ and $j=l,\ldots,p$, where the second equality follows from \eqref{def:Kj}.
\end{remark}

\section{Proof of Theorem~\ref{theorem:relations:bij}}
\label{section:rellimitcoeff}

Let
\[
F_{k}(z)=z-b_{k-1,k-1}+\sum_{l=1}^{\infty}\frac{c_{l}}{z^{l}},\qquad |z|\geq R_{0}, \qquad k=1,\ldots,p,
\]
denote the Laurent expansion at infinity of $F_{k}$, where we are using \eqref{def:Kj}--\eqref{constants:Ki}. Set
\[ F(z) = z+\sum_{l=1}^{\infty}\frac{c_{l}}{z^{l}},\qquad |z|\geq R_{0}.
\]
Obviously,
\begin{equation} \label{eq:Fs} F_k(z) = F(z) - b_{k-1,k-1}.
\end{equation}
Since $F$ is a translation of $F_k$, which is one branch of a conformal representation of the Riemann surface
$\mathcal{R}$ onto $\overline{\mathbb{C}}$ (see \eqref{ratio:asy:psi}), $F$ itself extends to a conformal
representation of $\mathcal{R}$ onto $\overline{\mathbb{C}}$. We denote by $\mathcal{F}$ the analytic extension of $F$
to all ${\mathcal{R}}$. For $j\neq k$ we have that $b_{j-1,j-1} \neq b_{k-1,k-1}$, otherwise $F_j = F_k$. Taking these
facts into account, we conclude that there exist $p$ distinct points $\zeta_0,\ldots,\zeta_{p-1} \in \mathcal{R}$ such
that
\[ \mathcal{F}(\zeta_k) = b_{k,k}, \qquad k=0,\ldots,p-1.
\]
In the sequel $\zeta_n = \zeta_k$ if $n \equiv k \mod p, k \in \{0,\ldots,p-1\}$.

It follows from \eqref{eq:relFb} and \eqref{eq:Fs} that
\[
b_{k,k-1}=\lim_{z\rightarrow\infty}(z-F (z) ) F_{k}(z)=-c_{1}=:\beta, \qquad k=0,\ldots,p-1,
\]
so \eqref{rel:b:1} is proved.

Using again \eqref{eq:relFb} and \eqref{eq:Fs}, we find that
\begin{equation} \label{eq:fund}
zF_{k}\cdots F_{k-p+1} = F F_{k} \cdots F_{k-p+1} + b_{k,k-1}F_{k-1}\cdots F_{k-p+1} + \cdots + b_{k,k-p}.
\end{equation}
Writing equation \eqref{eq:fund} substituting $k$ by $k+1$ and taking into consideration that $F_{k}\cdots
F_{k-p+1}=F_{k+1}\cdots F_{k-p+2}$, we obtain the equality
\[ b_{k,k-1}F_{k-1}\cdots F_{k-p+1} + \cdots + b_{k,k-p} = b_{k+1,k}F_{k}\cdots F_{k-p+2} + \cdots + b_{k+1,k-p+1},
\]
which due to \eqref{eq:Fs}, and taking into consideration that $F$ extends to all $\mathcal{R}$, takes the form
\begin{equation} \label{eq:fund1}  b_{k,k-1}(\mathcal{F} - b_{k-2,k-2})\cdots (\mathcal{F} - b_{k-p,k-p}) + \cdots +
b_{k,k-p+1}(\mathcal{F} - b_{k-p,k-p}) + b_{k,k-p} =
\end{equation}
\[
b_{k+1,k}(\mathcal{F} - b_{k-1,k-1})\cdots (\mathcal{F} - b_{k-p+1,k-p+1}) + \cdots + b_{k+1,k-p+2}(\mathcal{F} -
b_{k-p+1,k-p+1}) + b_{k+1,k-p+1}.
\]
Evaluating  \eqref{eq:fund1} at $\zeta_{k-p+1} \in \mathcal{R}$, it follows that
\begin{equation} \label{eq:aux1} b_{k,k-p+1}(b_{k-p+1,k-p+1}- b_{k-p,k-p}) + b_{k,k-p} = b_{k+1,k-p+1},
\end{equation}
which is equivalent to \eqref{rel:b:2} when $l = p-1$.

Now, evaluating \eqref{eq:fund1} at $\zeta_{k-p+2} \in \mathcal{R}$, we obtain
\[ b_{k,k-p+2}(b_{k-p+2,k-p+2}  - b_{k-p+1,k-p+1})(b_{k-p+2,k-p+2}  - b_{k-p,k-p}) +
\]
\[ b_{k,k-p+1}(b_{k-p+2,k-p+2}  - b_{k-p,k-p}) + b_{k,k-p}=
\]
\[ b_{k,k-p+2}(b_{k-p+2,k-p+2}  - b_{k-p+1,k-p+1})(b_{k-p+2,k-p+2}  - b_{k-p,k-p}) +
\]
\[b_{k,k-p+1}(b_{k-p+2,k-p+2}  - b_{k-p+1,k-p+1}) +
\]
\[ b_{k,k-p+1}(b_{k-p+1,k-p+1}  - b_{k-p,k-p})+ b_{k,k-p}=
\]
\[ b_{k+1,k-p+2}(b_{k-p+2,k-p+2} - b_{k-p+1,k-p+1}) + b_{k+1,k-p+1}.
\]
Using  \eqref{eq:aux1}, this equality reduces to
\[ b_{k,k-p+2}(b_{k-p+2,k-p+2}  - b_{k-p+1,k-p+1})(b_{k-p+2,k-p+2}  - b_{k-p,k-p}) +
\]
\[ b_{k,k-p+1}(b_{k-p+2,k-p+2}  - b_{k-p+1,k-p+1}) =
 b_{k+1,k-p+2}(b_{k-p+2,k-p+2} - b_{k-p+1,k-p+1}).
\]
We can cancel out $b_{k-p+2,k-p+2} - b_{k-p+1,k-p+1} (\neq 0)$ from both sides and we obtain
\begin{equation} \label{eq:aux2} b_{k,k-p+2} (b_{k-p+2,k-p+2}  - b_{k-p,k-p}) +  b_{k,k-p+1}  = b_{k+1,k-p+2}
\end{equation}
which is \eqref{rel:b:2} for $l=p-2$.

In general, assume that \eqref{rel:b:2} is valid for all $l\in\{p-1,p-2,\ldots,p-m\}$, with $m\leq p-2$. In order to
prove that it is also valid for $\widetilde{l}:=p-m-1$, we evaluate \eqref{eq:fund1} at $\zeta_{k-\widetilde{l}}$, and
we get
\[
b_{k,k-\widetilde{l}}(b_{k-\widetilde{l},k-\widetilde{l}}-b_{k-\widetilde{l}-1,k-\widetilde{l}-1})
\ldots(b_{k-\widetilde{l},k-\widetilde{l}}-b_{k-p,k-p})+
\]
\[
b_{k,k-\widetilde{l}}(b_{k-\widetilde{l},k-\widetilde{l}}-b_{k-\widetilde{l}-2,k-\widetilde{l}-2})
\ldots(b_{k-\widetilde{l},k-\widetilde{l}}-b_{k-p,k-p})+
\]
\[
\ldots+b_{k,k-p+2}(b_{k-\widetilde{l},k-\widetilde{l}}-b_{k-p+1,k-p+1})
(b_{k-\widetilde{l},k-\widetilde{l}}-b_{k-p,k-p})+
\]
\[
b_{k,k-p+1}(b_{k-\widetilde{l},k-\widetilde{l}}-b_{k-p,k-p})+b_{k,k-p}=
\]
\[
b_{k+1,k-\widetilde{l}}(b_{k-\widetilde{l},k-\widetilde{l}}-b_{k-\widetilde{l}-1,k-\widetilde{l}-1})
\ldots(b_{k-\widetilde{l},k-\widetilde{l}}-b_{k-p+1,k-p+1})+
\]
\[
b_{k+1,k-\widetilde{l}-1}(b_{k-\widetilde{l},k-\widetilde{l}}-b_{k-\widetilde{l}-2,k-\widetilde{l}-2})
\ldots(b_{k-\widetilde{l},k-\widetilde{l}}-b_{k-p+1,k-p+1})+
\]
\[
\ldots+b_{k+1,k-p+3}(b_{k-\widetilde{l},k-\widetilde{l}}-b_{k-p+2,k-p+2})
(b_{k-\widetilde{l},k-\widetilde{l}}-b_{k-p+1,k-p+1})+
\]
\[
b_{k+1,k-p+2}(b_{k-\widetilde{l},k-\widetilde{l}}-b_{k-p+1,k-p+1})+b_{k+1,k-p+1}.
\]
On the left-hand side we replace $b_{k,k-p+1}(b_{k-\widetilde{l},k-\widetilde{l}}-b_{k-p,k-p})$ by
\begin{equation}\label{replacement1}
b_{k,k-p+1}(b_{k-\widetilde{l},k-\widetilde{l}}-b_{k-p+1,k-p+1}) +b_{k,k-p+1}(b_{k-p+1,k-p+1}-b_{k-p,k-p}).
\end{equation}
Now we apply \eqref{rel:b:2} for $l=p-1$, and this allows us to delete the second term in \eqref{replacement1} along
with $b_{k,k-p}$ on the left-hand side and $b_{k+1,k-p+1}$ on the right-hand side. Now the factor
$b_{k-\widetilde{l},k-\widetilde{l}}-b_{k-p+1,k-p+1}$ appears in all the terms on both sides of the resulting equation.
We cancel out this factor everywhere to obtain a new equation in which on the left-hand side we have the expression
$$
b_{k,k-p+2}(b_{k-\widetilde{l},k-\widetilde{l}}-b_{k-p,k-p})+b_{k,k-p+1},
$$
and on the right-hand side we have
$$
b_{k+1,k-p+3}(b_{k-\widetilde{l},k-\widetilde{l}}-b_{k-p+2,k-p+2})+b_{k+1,k-p+2}.
$$
In a similar way, we now replace $b_{k,k-p+2}(b_{k-\widetilde{l},k-\widetilde{l}}-b_{k-p,k-p})$ by
\begin{equation}\label{replacement2}
b_{k,k-p+2}(b_{k-\widetilde{l},k-\widetilde{l}}-b_{k-p+2,k-p+2}) +b_{k,k-p+2}(b_{k-p+2,k-p+2}-b_{k-p,k-p})
\end{equation}
and apply \eqref{rel:b:2} for $l=p-2$. This allows us to delete the second term in \eqref{replacement2} along with
$b_{k,k-p+1}$ on the left-hand side and $b_{k+1,k-p+2}$ on the right-hand side. The factor
$b_{k-\widetilde{l},k-\widetilde{l}}-b_{k-p+2,k-p+2}$ appears in all the terms on both sides of the resulting equation.
We cancel out this factor.

It is clear that continuing in this fashion we will arrive at the desired equation. Relation \eqref{rel:b:1} may be
regarded a special case of \eqref{rel:b:2} for $l=0$. We are done. $\bol$

\begin{question} Suppose that a collection of real numbers $\{b_{k,j}\}$ is given satisfying \eqref{rel:b:1}--\eqref{rel:b:2}. What other conditions must be added in order that the given collection of numbers corresponds to the periodic coefficients of the recurrence relation satisfied by a sequence of Chebyshev-Nikishin polynomials?
\end{question}

\section{Formulas and strong asymptotics for the second kind functions}
\label{section:Widom:secondkind}

In this section we obtain strong asymptotics and an exact Widom-type formula for the second kind functions $\Psi_{n,l}$
in \eqref{secondkind:1}--\eqref{secondkind:2}. The function $\Psi_{n,l}$ (with $1 \leq l\leq p$) satisfies the
following boundary value problem:

\begin{bvp}\label{bvp:Psi:nk}\textrm{}
\begin{enumerate}
\item[$(a)$] $\Psi_{n,l}$ is analytic for $z\in\cee\setminus\Delta_l$. \item[$(b)$] $\Psi_{n,l}$ satisfies the jump
    relation ($\Psi_{n,0}(z) = P_n(z)$)
\begin{equation}\label{jump:secondkind}
(\Psi_{n,l}(x))_- - (\Psi_{n,l}(x))_+ = 2\pi i\,\Psi_{n,l-1}(x)\frac{\ud \rho_l(x)}{\ud x},\qquad x\in\Delta_l.
\end{equation}
$$
\Psi_{n,l}(z)=O\Big(\frac{1}{z^{n_{l}+1}}\Big),\qquad z\to\infty,
$$
with $n_l$ in \eqref{multi:index}. \item[$(d)$] $\Psi_{n,l}$ stays bounded near the endpoints $\alpha_l$ and
$\beta_l$ of the interval $\Delta_l$, i.e., $\Psi_{n,l}(z)=O(1)$ as $z\to\alpha_l$ and $z\to\beta_{l}$.
\end{enumerate}
\end{bvp}

Properties $(a)$ and $(b)$ follow immediately from the definition of $\Psi_{n,l}$. Property $(c)$ is a consequence of
the orthogonality properties satisfied by $\Psi_{n,l-1}$ with respect to the measure $\mathrm{d} \rho_{l}$, proved in
\cite[Proposition 1]{GRS}. Finally, $(d)$ follows from the fact that $\Psi_{n,l-1}$ is analytic on $\Delta_{l}$ and
\[
\frac{\mathrm{d} \rho_{l}(x)}{\mathrm{d} x} =\frac{\mathrm{d} \nu_{l}^{(l)}(x)}{\mathrm{d} x}= \left\{
\begin{array}{cc}
O(|x-\alpha_{l}|^{1/2}), & x\rightarrow\alpha_{l},\\[0.5em] O(|x-\beta_{l}|^{1/2}), & x\rightarrow\beta_{l},
\end{array}
\right.
\]
see \eqref{constants:cij:nik} and Remark~\ref{remark:diagonal:nik}. Given $\Psi_{n,l-1}$ (see \eqref{jump:secondkind}),
it is readily seen that  $\Psi_{n,l}(z)$ is uniquely determined by the above boundary value problem.

\begin{proposition}\label{PropSecondKind1}
Fix $l\in\{0,\ldots,p\}$. For any $m$ large enough and for any $k=0,\ldots,p-1$, the following Widom-type formula
holds:
\begin{multline}\label{Widom:psi:n2}
\Psi_{mp+k,l}(x)=\\ \frac{(-1)^{p+k}}{f_{p}}\sum_{j=l+1}^{p+1} \frac{\det F^{[p,k+1]}(z_j(x),x)}{\prod_{t=1,t\neq
j}^{p+1}(z_{j}(x)-z_{t}(x))}\left(
\prod_{i=0}^{l-1}m_{i+1,i+1}\frac{\psi^{(1)}_{i}(x)-\psi^{(1)}_{j-1}(x)}{\psi^{(l)}_{i}(x)
\psi^{(i+1)}_{j-1}(x)}\right)z_j(x)^{-m-1}.
\end{multline}
The constants $m_{i+1,i+1}$ are given in \eqref{constants:cij:nik}--\eqref{formula:clj}.
\end{proposition}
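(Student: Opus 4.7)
The natural strategy is induction on $l\in\{0,1,\ldots,p\}$, combined with the uniqueness of the solution to the boundary value problem BVP~\ref{bvp:Psi:nk}. For the base case $l=0$, the empty product $\prod_{i=0}^{-1}(\cdot)$ equals $1$ and the sum runs over $j=1,\ldots,p+1$, so the claim reduces verbatim to Widom's formula \eqref{Widom:Pn} for $P_{mp+k}=\Psi_{mp+k,0}$. For the inductive step, denote by $\til\Psi_{mp+k,l}(x)$ the right-hand side of the proposed formula. The plan is to verify that $\til\Psi_{mp+k,l}$ satisfies all four properties $(a)$--$(d)$ of BVP~\ref{bvp:Psi:nk} (with $\Psi_{mp+k,l-1}$ in the jump relation replaced by the inductive formula for level $l-1$), and then conclude $\til\Psi_{mp+k,l}=\Psi_{mp+k,l}$ by uniqueness.

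For analyticity (a), I would analyze the behaviour across each $\Delta_m$, $m\neq l$. When $m>l$ the sheet swap exchanges the branches $(z_m,\psi^{(s)}_{m-1})$ with $(z_{m+1},\psi^{(s)}_m)$; since both $j=m$ and $j=m+1$ lie in the summation range $\{l+1,\ldots,p+1\}$, the corresponding summands swap, while the common factor $\prod_{i=0}^{l-1}m_{i+1,i+1}\psi^{(1)}_i/\psi^{(l)}_i$ depends only on sheets $\err_0,\ldots,\err_{l-1}$ and is therefore unaffected. When $m<l$ the sheet swap $\psi^{(s)}_{m-1}\leftrightarrow\psi^{(s)}_m$ occurs within the index range $i=0,\ldots,l-1$ of the common factor, which is invariant under the swap; the $j$-dependent branches $\psi^{(s)}_{j-1}$ with $j-1\geq l>m$ and the roots $z_j$ do not feel the swap at all. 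For the decay (c) I would insert the asymptotics $z_1(x)\sim x^{-p}$, $z_j(x)\sim C_jx$ for $j\geq 2$, together with the known behaviour of each $\psi^{(s)}_i(x)$ at the points at infinity on the various sheets, to check that the proposed formula gives $O(x^{-n_l-1})$; the leading $j=l+1$ term already furnishes the dominant power, and the remaining algebraic simplifications follow from the Liouville identities \eqref{constants:Ki}. For the endpoint bound (d), the apparent singularity $1/(z_{l+1}-z_l)\sim (x-\alpha_l)^{-1/2}$ coming from the $j=l+1$ summand is cancelled by the zero $\psi^{(1)}_{l-1}(x)-\psi^{(1)}_l(x)\sim (x-\alpha_l)^{1/2}$ contributed by the $i=l-1$ factor in the product, while the summands with $j\geq l+2$ are analytic at $\alpha_l,\beta_l$.

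The main obstacle is verifying the jump relation (b). Two kinds of boundary-value changes contribute on $\Delta_l$: the common factor $\prod_{i=0}^{l-1}m_{i+1,i+1}\psi^{(1)}_i/\psi^{(l)}_i$ jumps through its $i=l-1$ terms (using $\psi^{(s)}_{l-1,\pm}=\psi^{(s)}_{l,\mp}$), and the $j=l+1$ summand additionally jumps because $z_{l+1,\pm}=z_{l,\mp}$ together with the corresponding swap of the $\psi^{(i+1)}_{l-1}$'s. The strategy is to use the identities $\psi^{(s)}_{l-1,+}\psi^{(s)}_{l-1,-}=\psi^{(s)}_{l-1}\psi^{(s)}_l$ (which are analytic across $\Delta_l$) to rewrite the combined jump as a sum over $j\geq l$, thereby telescoping it back into the shape of the inductive formula for $\Psi_{mp+k,l-1}$ multiplied by the Stieltjes-Perron density $\rho_l'(x)$ given by \eqref{constants:cij:nik}. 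Matching the overall constant uses the recursion $m_{l,l}=m_{l-1,l-1}(K_l-K_{l-1})/m_{l-1,l-1}$ from Remark~\ref{recursion:clj}, which also supplies the new factor $m_{l,l}(\psi^{(1)}_{l-1}-\psi^{(1)}_l)/(\psi^{(l)}_{l-1}\psi^{(l)}_{j-1})$ that distinguishes the $l$-th formula from the $(l-1)$-th one.
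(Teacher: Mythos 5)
Your overall strategy — induction on $l$ with base case \eqref{Widom:Pn}, and appeal to uniqueness of a boundary value problem — is the same as the paper's. Your treatment of the jump condition (b) is also essentially the paper's calculation (splitting the $j=l+1$ summand from the rest, using $\psi^{(s)}_{l-1,\pm}=\psi^{(s)}_{l,\mp}$ and the analyticity of $\psi^{(s)}_{l-1,+}\psi^{(s)}_{l-1,-}$ across $\Delta_l$, and matching the constant via $m_{l,l}$).

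However, there is a real gap: you claim to verify the original conditions $(a)$, $(c)$, $(d)$ of BVP~\ref{bvp:Psi:nk} directly from the formula, and this cannot be done. The set $\mathcal A$ of discriminant points (where $z_i(x)=z_j(x)$) is not confined to the endpoints of $\Delta_l$; the formula may have genuine poles at points of $\mathcal A$ lying in $\cee\setminus\Delta_l$, since the factors $1/\prod_{t\neq j}(z_j-z_t)$ blow up there and the symmetric-function argument only guarantees single-valuedness, not absence of poles. Your condition (d) argument addresses only $\alpha_l,\beta_l$. Likewise, your condition (c) argument asserts the exact decay $O(x^{-n_l-1})$ ``after algebraic simplifications'' — but this is precisely the delicate cancellation that the paper sidesteps. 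The paper replaces $(a)$, $(c)$, $(d)$ by weaker conditions $(a')$–$(d')$ which allow poles of bounded order at $\mathcal A$ and only require $O(z^{-m-\delta})$ decay for a fixed $\delta$, and then observes that this weaker BVP still pins down the solution uniquely \emph{provided $m$ is large enough} — so large that any admissible pole contribution would be incompatible with the high-order zero at infinity. That is why the proposition is stated only for $m$ large, a hypothesis your proof does not use anywhere, which should have alerted you. Without the weakening, your verification of $(a)$, $(c)$, $(d)$ is incomplete.
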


\begin{remark}
Formula \eqref{Widom:psi:n2} can be computed for all $x\in\cee\setminus\mathcal A$, where $\mathcal A$ consists of
those points $x\in\cee$ for which $z_i(x)=z_j(x)$ for certain $i\neq j$. Thus $\mathcal A$ is a finite set and it is
formed by the zeros of the discriminant of $f(z,x)=0$ in \eqref{algebraic:equation} with respect to the variable $z$.
Note that the endpoints of the intervals $\Delta_{l}$ are points in $\mathcal{A}$.

The product of the entries $m_{i+1,i+1}$ in \eqref{Widom:psi:n2} can be evaluated using \eqref{formula:clj}:
\[
\prod_{i=0}^{l-1}m_{i+1,i+1}=-\prod_{i=1}^{l-1}(K_{l}-K_{i})=-\prod_{i=0}^{l-2}(b_{i,i}-b_{l-1,l-1}).
\]
(The minus sign comes from $m_{1,1}=-1$.) It also follows from \eqref{ratio:asy:PQ}, \eqref{strongasy:Pn}, and
\eqref{ratio:asy:psi} that
\begin{equation}\label{minors:psi:1}
\psi^{(k)}_{j-1}(x) = -\frac{\det F^{[p,k+1]}(z_j(x),x)}{\det F^{[p,k]}(z_j(x),x)},
\end{equation}
for all $k=1,\ldots,p-1$ and $j=1,\ldots,p+1$, and
\begin{equation}\label{minors:psi:2} \psi^{(p)}_{j-1}(x) =(-1)^{p-1}
z_j^{-1}(x)\frac{\det F^{[p,1]}(z_j(x),x)}{\det F^{[p,p]}(z_j(x),x)},
\end{equation}
for all $j=1,\ldots,p+1$, so in fact all the quantities in \eqref{Widom:psi:n2} depend on the minors of the block
Toeplitz symbol $F(z,x)$ \eqref{symbol:Toeplitz} evaluated at the roots $z=z_j(x)$.

We will prove formula \eqref{Widom:psi:n2} only for $m$ large enough, although it may actually be valid for all $m$.
\end{remark}

\begin{proof}[Proof of Proposition~\ref{PropSecondKind1}]
We prove \eqref{Widom:psi:n2} using induction on $l$. The case $l=0$ reduces to \eqref{Widom:Pn}. Let us assume as
induction hypothesis that \eqref{Widom:psi:n2} holds with $l$ replaced by $l-1$. We need to check that the right hand
side of \eqref{Widom:psi:n2} satisfies the four conditions $(a)$--$(d)$ in the boundary value problem~\ref{bvp:Psi:nk}.
Actually we will prove that the right hand side of \eqref{Widom:psi:n2}, which we denote by
$\widetilde{\Psi}_{mp+k,l}$, satisfies a weaker version of $(a)$--$(d)$ where these conditions are replaced by
\begin{itemize}
\item[$(a')$] $\widetilde{\Psi}_{mp+k,l}(z)$ is analytic for $z\in\cee\setminus(\Delta_l\cup\mathcal A)$.
    \item[$(b')$] $\widetilde{\Psi}_{mp+k,l}$ satisfies the jump relation
\[
(\widetilde{\Psi}_{mp+k,l}(x))_- - (\widetilde{\Psi}_{mp+k,l}(x))_+ = 2\pi
i\,\widetilde{\Psi}_{mp+k,l-1}(x)\frac{\ud \rho_l(x)}{\ud x},\qquad x\in\Delta_l\setminus\mathcal{A}.
\]
\item[$(c')$] As $z\rightarrow\infty$,
\[
\widetilde{\Psi}_{mp+k,l}(z)=O\Big(\frac{1}{z^{m+\delta}}\Big),
\]
where $\delta\in\mathbb{R}$ is some fixed constant independent of $m$. \item[$(d')$] Near each point
$a_i\in\mathcal A$ there is a fixed integer $q_i$ such that $\widetilde{\Psi}_{mp+k,l}(z)=O((z-a_i)^{-q_i/2})$.
\end{itemize}
The integer $q_i$ will be precisely the multiplicity of $x=a_i\in\mathcal A$ as a zero of the discriminant of
$f(z,x)=0$. The solution to the boundary value problem $(a')$--$(d')$ is unique only for $m$ large enough, since then
the (possible) poles at the points in $\mathcal A$ cannot compete with the high-order zero at infinity. This will prove
that $\Psi_{mp+k,l}\equiv \widetilde{\Psi}_{mp+k,l}$; that is, \eqref{Widom:psi:n2} holds.

In view of \eqref{minors:psi:1}--\eqref{minors:psi:2}, the function $\widetilde{\Psi}_{mp+k,l}$ is a symmetric function
of $z_{l+1},\ldots,z_{p+1}$, and it is also a symmetric function of the variables $z_{1},\ldots,z_{l}$. More
specifically, if we view $\widetilde{\Psi}_{mp+k,l}$ as a function
$\Lambda(z_{1},\ldots,z_{l},z_{l+1},\ldots,z_{p+1},x)$ in the variables $z_{k}$ and $x$, then
\[
\Lambda(z_{1},\ldots,z_{l},z_{l+1},\ldots,z_{p+1},x)=\Lambda(z_{1}^{*},\ldots,z_{l}^{*},z_{l+1}^{*},\ldots,z_{p+1}^{*},x),
\]
where $(z_{1}^{*},\ldots,z_{l}^{*})$ is any permutation of $(z_{1},\ldots,z_{l})$, and
$(z_{l+1}^{*},\ldots,z_{p+1}^{*})$ is any permutation of $(z_{l+1},\ldots,z_{p+1})$. This property readily implies the
analyticity of $\widetilde{\Psi}_{mp+k,l}$ on $\cee\setminus(\Delta_l\cup\mathcal{A})$, which is Part $(a')$.

For Part $(b')$ we distinguish in the expression of $\widetilde{\Psi}_{mp+k,l}(x)$ the term with $j=l+1$ from the terms
with $j=l+2,\ldots,p+1$. We start with the latter terms. First we write the factor with $i=l-1$ in the product in
\eqref{Widom:psi:n2} as
\begin{equation}\label{widom:split}
\frac{\psi^{(1)}_{l-1}(x)-\psi^{(1)}_{j-1}(x)}{\psi^{(l)}_{l-1}(x)\psi^{(l)}_{j-1}(x)} =
\frac{\psi^{(l)}_{l-1}(x)-\psi^{(l)}_{j-1}(x)}{\psi^{(l)}_{l-1}(x)\psi^{(l)}_{j-1}(x)}
=\frac{1}{\psi^{(l)}_{j-1}(x)}-\frac{1}{\psi^{(l)}_{l-1}(x)}.
\end{equation}
Observe now that the $+$ and $-$ boundary values on $\Delta_{l}\setminus\mathcal{A}$ of the expression
\begin{equation}\label{eq:aux4}
\frac{(-1)^{p+k}}{f_{p}}\sum_{j=l+2}^{p+1} \frac{\det F^{[p,k+1]}(z_j(x),x)}{\prod_{t=1,t\neq
j}^{p+1}(z_{j}(x)-z_{t}(x))}
\left(\prod_{i=0}^{l-2}m_{i+1,i+1}\frac{\psi^{(1)}_{i}(x)-\psi^{(1)}_{j-1}(x)}{\psi^{(l)}_{i}(x)
\psi^{(i+1)}_{j-1}(x)}\right)\frac{z_j(x)^{-m-1}}{\psi_{j-1}^{(l)}(x)}
\end{equation}
are equal. Indeed, this follows since, with the identifications \eqref{minors:psi:1}--\eqref{minors:psi:2}, equation
\eqref{eq:aux4} depends symmetrically on the variables $z_{l+2},\ldots,z_{p+1}$, depends symmetrically also on
$z_1,\ldots,z_{l-1}$, and does not depend on $z_l$ or $z_{l+1}$.

The equality of the $+$ and $-$ boundary values of \eqref{eq:aux4} also holds if we drop the factor
$1/\psi_{j-1}^{(l)}(x)$. In view of \eqref{widom:split}, it then follows that if we subtract the $+$ boundary value
from the $-$ boundary value of the terms in $\widetilde{\Psi}_{mp+k,l}(x)$ with $j=l+2,\ldots,p+1$, we obtain
\begin{multline}\label{eq:aux5}
m_{l,l}\frac{\psi_{l-1,-}^{(l)}(x) -\psi_{l-1,+}^{(l)}(x)}{\psi_{l-1}^{(l)}(x)\psi_{l}^{(l)}(x)}\\
\times\frac{(-1)^{p+k}}{f_{p}}\sum_{j=l+2}^{p+1} \frac{\det F^{[p,k+1]}(z_j(x),x)}{\prod_{t=1,t\neq
j}^{p+1}(z_{j}(x)-z_{t}(x))}\,
\left(\prod_{i=0}^{l-2}m_{i+1,i+1}\frac{\psi^{(1)}_{i}(x)-\psi^{(1)}_{j-1}(x)}{\psi^{(l)}_{i}(x)
\psi^{(i+1)}_{j-1}(x)}\right)z_j(x)^{-m-1}.
\end{multline}
Here we also used that the product $\psi_{l-1,-}^{(l)}(x)\psi_{l-1,+}^{(l)}(x)=\psi_{l-1}^{(l)}(x)\psi_{l}^{(l)}(x)$ is
analytic across $\Delta_l$.

Using the relations $z_{l+1,\pm}(x)=z_{l,\mp}(x)$, $\psi_{l,\pm}^{(s)}(x)=\psi_{l-1,\mp}^{(s)}(x)$, $x\in\Delta_{l}$,
and other simple considerations, it is easy to see that the difference of the $-$ and $+$ boundary values of the term
in $\widetilde{\Psi}_{mp+k,l}(x)$ corresponding to $j=l+1$ is given by the expression in \eqref{eq:aux5} with the sum
$\sum_{j=l+2}^{p+1}$ replaced by $\sum_{j=l}^{l+1}$. In conclusion,
\begin{multline}\label{eq:aux6}
(\widetilde{\Psi}_{mp+k,l}(x))_{-} - (\widetilde{\Psi}_{mp+k,l}(x))_{+}=m_{l,l}\frac{\psi_{l-1,-}^{(l)}(x)
-\psi_{l-1,+}^{(l)}(x)}{\psi_{l-1}^{(l)}(x)\psi_{l}^{(l)}(x)}\\ \times\frac{(-1)^{p+k}}{f_{p}}\sum_{j=l}^{p+1}
\frac{\det F^{[p,k+1]}(z_j(x),x)}{\prod_{t=1,t\neq j}^{p+1}(z_{j}(x)-z_{t}(x))}
\left(\prod_{i=0}^{l-2}m_{i+1,i+1}\frac{\psi^{(1)}_{i}(x)-\psi^{(1)}_{j-1}(x)}{\psi^{(l)}_{i}(x)
\psi^{(i+1)}_{j-1}(x)}\right)z_j(x)^{-m-1}.
\end{multline}
By the induction hypothesis, we know that $\Psi_{mp+k,l-1}(x)$ is given by the expression in the last line of
\eqref{eq:aux6} with $\psi_{i}^{(l)}(x)$ replaced by $\psi_{i}^{(l-1)}(x)$. This observation and
\eqref{constants:cij:nik} gives Part $(b')$.

Finally, the conditions $(c')$ (cf. \eqref{growth:inf}) and $(d')$ are obvious. This proves \eqref{Widom:psi:n2}.
\end{proof}

\begin{remark}
The functions defined by
\[
\Phi_{n,l}(z):=\int_{\Delta_{1}}\frac{P_{n}(t)}{z-t}\ud\nu_{l}(t), \qquad l=1,\ldots,p,
\]
where the measures $\nu_{l}$ are the orthogonality measures in Theorem \ref{theorem:main:ortho}, are also often called
second kind functions (see \cite[pg. 672]{GRS}), and represent the linear remainders in the Hermite-Pad\'{e}
approximation to the functions $\int_{\Delta_{1}}\frac{\ud\nu_{l}(t)}{z-t}$, $l=1,\ldots,p$. It is easy to see that for
all $m$ large enough, we have the formula
\begin{equation}\label{formula:Phinl}
\Phi_{mp+k,l}(x)=\frac{(-1)^{p+k}}{f_{p}}\sum_{j=2}^{p+1} \frac{\det F^{[p,k+1]}(z_j(x),x)}{\prod_{t=1,t\neq
j}^{p+1}(z_{j}(x)-z_{t}(x))}\, \left(\frac{1}{\psi_{0}^{(l)}(x)}-\frac{1}{\psi_{j-1}^{(l)}(x)}\right) z_j(x)^{-m-1}.
\end{equation}
We leave the justification of \eqref{formula:Phinl} to the reader.
\end{remark}

\begin{corollary}
The following strong asymptotic formulas hold uniformly on compact subsets of the indicated regions. For each fixed
$k\in\{0,\ldots,p-1\}$ and $l\in\{1,\ldots,p\}$,
\begin{multline*}
\lim_{m\rightarrow\infty}\Psi_{mp+k,l}(x)\,z_{l+1}(x)^{m+1}\\ =\frac{(-1)^{p+k}}{f_{p}}\frac{\det
F^{[p,k+1]}(z_{l+1}(x),x)}{\prod_{t=1,t\neq l+1}^{p+1}(z_{l+1}(x)-z_{t}(x))}\,
\left(\prod_{i=0}^{l-1}m_{i+1,i+1}\frac{\psi^{(1)}_{i}(x)-\psi^{(1)}_{l}(x)}{\psi^{(l)}_{i}(x)
\,\psi^{(i+1)}_{l}(x)}\right),\quad x\in\mathbb{C}\setminus(\Delta_{l}\cup\Delta_{l+1}),
\end{multline*}
and
\begin{multline*}
\lim_{m\rightarrow\infty}\Phi_{mp+k,l}(x)\,z_{2}(x)^{m+1}\\ =\frac{(-1)^{p+k}}{f_{p}}\frac{\det
F^{[p,k+1]}(z_{2}(x),x)}{\prod_{t=1,t\neq 2}^{p+1}(z_{2}(x)-z_{t}(x))}\,
\left(\frac{1}{\psi_{0}^{(l)}(x)}-\frac{1}{\psi_{1}^{(l)}(x)}\right) ,\qquad
x\in\mathbb{C}\setminus(\Delta_{1}\cup\Delta_{2}).
\end{multline*}
\end{corollary}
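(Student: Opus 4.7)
The plan is to obtain both asymptotic formulas as direct consequences of the exact Widom-type expressions \eqref{Widom:psi:n2} and \eqref{formula:Phinl}, by isolating the term with the smallest $|z_j(x)|^{m+1}$ in each sum and showing that the remaining terms decay geometrically in $m$, uniformly on compact subsets of the stated domains.

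For the first formula, I would fix $l\in\{1,\ldots,p\}$ and $k\in\{0,\ldots,p-1\}$, multiply both sides of \eqref{Widom:psi:n2} by $z_{l+1}(x)^{m+1}$, and split the sum on the right as the $j=l+1$ contribution (which does not depend on $m$ and already equals the claimed right-hand side of the corollary) plus the tail $\sum_{j=l+2}^{p+1}$. Each term in the tail carries a factor $(z_{l+1}(x)/z_j(x))^{m+1}$. By Proposition~\ref{prop:QP:samelimit}, $\Gamma_{l+1}=\Delta_{l+1}$, so for every $x\in\mathbb{C}\setminus(\Delta_l\cup\Delta_{l+1})$ we have the strict inequality $|z_{l+1}(x)|<|z_{l+2}(x)|\leq\cdots\leq|z_{p+1}(x)|$; on any compact $K\subset\mathbb{C}\setminus(\Delta_l\cup\Delta_{l+1})$ the continuous function $x\mapsto|z_{l+1}(x)|/|z_{l+2}(x)|$ therefore attains a maximum $\rho<1$, giving the uniform geometric decay of each tail term.

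To justify that the prefactors in front of $(z_{l+1}/z_j)^{m+1}$ are uniformly bounded on $K$, I would appeal to the symmetry observation already used in the proof of Proposition~\ref{PropSecondKind1}: the right-hand side of \eqref{Widom:psi:n2}, viewed as a function of $x$, is symmetric in $z_1,\ldots,z_l$ and in $z_{l+1},\ldots,z_{p+1}$, so its only possible singularities in the physical plane lie on $\Delta_l\cup\mathcal{A}$; after restriction to $K$ (which is disjoint from $\Delta_l$ and contains only finitely many points of $\mathcal{A}$, all of which can be sidestepped by shrinking $K$ slightly or by noting that at such points the pair causing the coincidence is not $(l+1,l+2)$, since $\Gamma_{l+1}=\Delta_{l+1}$), the prefactors are bounded. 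Thus multiplication by $z_{l+1}(x)^{m+1}$ sends the tail terms uniformly to zero, proving the first asymptotic.

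The second asymptotic, for $\Phi_{mp+k,l}(x)$, follows by the identical argument applied to \eqref{formula:Phinl}: multiply by $z_2(x)^{m+1}$, isolate the $j=2$ contribution, and bound the remaining terms $j=3,\ldots,p+1$ using $|z_2(x)|<|z_3(x)|$, which holds strictly on compact subsets of $\mathbb{C}\setminus(\Delta_1\cup\Delta_2)$ since $\Gamma_2=\Delta_2$ and since $x\notin\Delta_1$ keeps $z_1$ and the associated $\psi$ branches separated from $z_2$. The only real obstacle to watch out for is the uniformity on compact subsets that may come arbitrarily close to $\Delta_l\cup\Delta_{l+1}$ (respectively $\Delta_1\cup\Delta_2$), but since the compact set is by assumption at positive distance from those cuts, the continuity argument suffices and no deeper analysis is required.
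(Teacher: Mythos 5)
Your proposal is correct and follows essentially the same route as the paper: the paper's proof of this corollary is a one-line observation that the result is immediate from the exact Widom-type formulas \eqref{Widom:psi:n2} and \eqref{formula:Phinl} together with the strict inequality $|z_{l+1}(x)|<|z_j(x)|$ for $j\geq l+2$ on $\mathbb{C}\setminus(\Delta_l\cup\Delta_{l+1})$. Your additional remarks on the uniform boundedness of the prefactors and the handling of $\mathcal{A}$ are a reasonable elaboration of the same idea and do not change the method.
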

\begin{proof}
This is an immediate consequence of \eqref{Widom:psi:n2}, \eqref{formula:Phinl}, and the fact that
$|z_{l+1}(x)|<|z_{j}(x)|$ for all $j\geq l+2$ and $x\in\mathbb{C}\setminus(\Delta_{l}\cup\Delta_{l+1})$ (cf.
\eqref{ordering:roots} and \eqref{defGammas}).
\end{proof}

\section{Alternative systems of multi-indices}
\label{section:concluding:remark}

The results in this paper were formulated for the standard ``staircase" system of multi-indices $\mathbf n$ in
\eqref{multi:index}. More generally, let $\Pi:=(\pi_1,\ldots,\pi_p)$ be an arbitrary permutation of $(1,\ldots,p)$. For
each $n\in\zet_{\geq 0}$, define
$$\mathbf{n}^{\Pi}:=(n_1^{\Pi},\ldots,n_p^{\Pi})\in\zet_{\geq 0}^p$$ as the
unique multi-index such that
\begin{equation}\label{multi:index:alt}n_{\pi_1}^{\Pi}\geq \ldots\geq
n_{\pi_p}^{\Pi}\geq n_{\pi_1}^{\Pi}-1,\qquad \textrm{and}\ \ \ |\mathbf{n}^{\Pi}|:=
n_{1}^{\Pi}+\ldots+n_{p}^{\Pi}=n.\end{equation} For the trivial permutation $\Pi=(\pi_1,\ldots,\pi_p)=(1,\ldots,p)$,
this definition reduces to \eqref{multi:index}.

Let $Q_n^{\Pi}(z)$ be the monic multiple orthogonal polynomial of degree $n$ satisfying \eqref{Qn:ortho}, with $n_k$
replaced by $n_k^{\Pi}$. The polynomials $Q_n^{\Pi}(z)$ satisfy a recurrence relation of the form \eqref{recrelQn},
with recurrence coefficients $a^{\Pi}_{n,m}$ having periodic limits $b^{\Pi}_{i,j}$ as in \eqref{def:limitreccoeff}. We
define the block Toeplitz matrix $T^{\Pi}$ and the Chebyshev-Nikishin polynomials $P^{\Pi}_n(z)$ as in
\eqref{defH}--\eqref{blocks:Bminus1}  and \eqref{Pn:def} respectively, with $b^{\Pi}_{i,j}$ instead of $b_{i,j}$.

We have the following generalization to Theorem~\ref{theorem:relations:bij}: The numbers $b^{\Pi}_{i,j}$ satisfy the
relations \eqref{rel:b:1}--\eqref{rel:b:2}, with $\beta$ independent of the permutation $\Pi$.

We also have the following generalization to Theorem~\ref{theorem:main:ortho}.

\begin{theorem}\label{theorem:multi}
The Chebyshev-Nikishin polynomials $P_n^{\Pi}$ satisfy the orthogonality conditions
\begin{equation}\label{Pn:mop:alt}
\int_{\Delta_1} P_{n}^{\Pi}(x)\, x^{l} \ud\nu_k(x)=0,\qquad l=0,\ldots,n_{k}^{\Pi}-1,\quad k=1,\ldots,p,
\end{equation}
with $\nu_k$ the measure defined by \eqref{cauchy1} (independently of the permutation $\Pi$), and with $n_{k}^{\Pi}$
the $k$-th component of $\mathbf{n}^{\Pi}$ in \eqref{multi:index:alt}.
\end{theorem}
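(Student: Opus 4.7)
\medskip

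The strategy is to mimic the proof of Theorem~\ref{theorem:main:ortho} in Section~\ref{section:proof:main:ortho}, tracking how the permutation $\Pi$ reorders the $F_k$'s at each step. The underlying Riemann surface $\mathcal R$, the branches $\psi^{(k)}$, and the Cauchy transforms $1/F_k$ associated with $\nu_k$ depend only on the intervals $\Delta_1,\ldots,\Delta_p$ and on the generating measures, \emph{not} on $\Pi$. What changes with $\Pi$ is merely the order in which we encounter these branches along the staircase.

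First I would establish the analogue of Proposition~\ref{prop:QP:samelimit} for $P_n^{\Pi}$: for the multi-index system $\mathbf{n}^{\Pi}$, passing from degree $mp+k-1$ to degree $mp+k$ increases precisely the component indexed by $\pi_k$. By the ratio asymptotic theorem of \cite{AptLopRoc} applied to this staircase, one obtains
$$\lim_{m\to\infty}\frac{Q^{\Pi}_{mp+k}(z)}{Q^{\Pi}_{mp+k-1}(z)}=F_{\pi_k}(z)=\psi_0^{(\pi_k)}(z),\qquad k=1,\ldots,p,$$
uniformly on compact subsets of $\cee\setminus\Delta_1$. The entire argument of Section~\ref{section:ratioasy} (block Toeplitz symbol, algebraic equation of degree $p+1$, total-mass bookkeeping on $\mathcal R$) transfers verbatim to the permuted block Toeplitz matrix $T^{\Pi}$, with the roots $z_k^{\Pi}(x)$ of its symbol yielding the same Chebyshev-type asymptotics and thus $P^{\Pi}_{mp+k}(x)/P^{\Pi}_{mp+k-1}(x)\to F_{\pi_k}(x)$.

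Next I would apply Kaliaguine's theorem (Section~\ref{section:kalyagin}) to the reflected block Toeplitz matrix $\widetilde T^{(k),\Pi}$ obtained from $T^{\Pi}$ exactly as in Section~\ref{section:ChebNikpoly}. By Remark~\ref{rmkHP} and the telescoping product
$$\lim_{m\to\infty}\frac{P^{\Pi}_{mp+k-j}(x)}{P^{\Pi}_{mp+k}(x)}=\frac{1}{F_{\pi_k}(x)F_{\pi_{k-1}}(x)\cdots F_{\pi_{k-j+1}}(x)},$$
(where indices of $\pi$ are read cyclically modulo $p$), the vector \eqref{HPapprox} is a Hermite-Pad\'{e} approximant to a system of functions $\widetilde G^{(k),\Pi}_j$ built from these reciprocal products, with respect to a multi-index whose $j$-th component is $n_j^{\Pi}$. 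Proposition~\ref{prop:constants:cij} then adapts with essentially no change: the crucial input \eqref{constants:Ki}, $F_j-F_k\equiv K_j-K_k$, is a property of the Nikishin system and is insensitive to $\Pi$. Therefore a suitable choice of constants collapses $\widetilde G^{(k),\Pi}_j$ to $1/F_{\pi_\sigma(k,j)}$ for an appropriate index; reindexing shows that, for each $k=1,\ldots,p$, one obtains
$$\int_{\gamma} P^{\Pi}_n(x)\,x^l\,\frac{1}{F_k(x)}\,\ud x=0,\qquad l=0,\ldots,n^{\Pi}_k-1,$$
for an arbitrary Jordan curve $\gamma$ surrounding $\Delta_1$. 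Shrinking $\gamma$ to $\Delta_1$ and invoking the Stieltjes-Perron inversion formula gives \eqref{Pn:mop:alt}.

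The main obstacle, and the only step where I expect genuine bookkeeping, is the precise matching between the Hermite-Pad\'e multi-index produced by Kaliaguine's theorem applied to $\widetilde T^{(k),\Pi}$ and the target multi-index $\mathbf{n}^{\Pi}$ in \eqref{multi:index:alt}: one must verify that, when the telescoping product above is reorganized using Proposition~\ref{prop:constants:cij}, the number of orthogonality constraints that attach to the weight $1/F_k$ is exactly $n_k^{\Pi}$, regardless of $k$'s position $\pi^{-1}(k)$ in the permutation. Once this combinatorial identification is carried out, the remainder of the argument is identical to Section~\ref{section:proof:main:ortho}.
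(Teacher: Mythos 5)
Your proposal is correct and follows essentially the paper's own suggested route for Theorem~\ref{theorem:multi}: adapt Proposition~\ref{prop:QP:samelimit} (using that the ratio asymptotics of \cite{AptLopRoc} give $F^{\Pi}_k=F_{\pi_k}$ and that the symbol's roots and the sets $\Gamma_k^{\Pi}$ are invariant under $\Pi$), then repeat Section~\ref{section:proof:main:ortho} with the telescoping product $\lim P^{\Pi}_{mp+k-j}/P^{\Pi}_{mp+k}=1/(F_{\pi_k}\cdots F_{\pi_{k-j+1}})$, collapse via Proposition~\ref{prop:constants:cij} (which only needs \eqref{constants:Ki}), and shrink the contour. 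One small imprecision: Kaliaguine's theorem applied to $\widetilde{T}^{(k),\Pi}$ produces the \emph{canonical} staircase multi-index $\mathbf{n}$ of \eqref{multi:index}, not $\mathbf{n}^{\Pi}$; the index $\mathbf{n}^{\Pi}$ emerges only after the reindexing $j\mapsto\pi_{(k-j+1)\bmod p}$. The bookkeeping you flag does go through: as $j$ runs over $\{1,\ldots,p\}$, $i=(k-j+1)\bmod p$ runs bijectively over $\{1,\ldots,p\}$ with $j\le k\iff i\le k$, so the weight $1/F_{\pi_i}$ receives $m+1$ constraints exactly when $i\le k$, which is precisely $n^{\Pi}_{\pi_i}$ by \eqref{multi:index:alt}.
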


Theorem~\ref{theorem:multi} can be proved in a similar way as Theorem~\ref{theorem:main:ortho} and we leave the details
to the interested reader. See also the next paragraphs for more information.

We define $F^{\Pi}_{k}(z)$ as in \eqref{ratioasymp}, with each $Q_n$ replaced by $Q^{\Pi}_n$. The main result in
\cite{AptLopRoc} implies that $F^{\Pi}_{k}(z)=F_{\pi_k}(z)$.

Define the quantities $F^{\Pi}(z,x)$, $f^{\Pi}(z,x)$, $z_k^{\Pi}(x)$ and $\Gamma_k^{\Pi}$ as in
\eqref{symbol:Toeplitz}--\eqref{defGammas}, with $b^{\Pi}_{i,j}$ instead of $b_{i,j}$. Reproducing the proof in
Section~\ref{section:ratioasy}, we are led to the following analogue of \eqref{ztilde:product}:
\begin{equation*}z_1^{\Pi}(x) = \frac{1}{F^{\Pi}_{1}(x)\ldots F^{\Pi}_{p}(x)} = \frac{1}{F_{1}(x)\ldots
F_{p}(x)}=z_1(x),
\end{equation*}
for all $x$ sufficiently large, where we used the above observation that $F^{\Pi}_{k}(z)=F_{\pi_k}(z)$. So for all $x$
sufficiently large, $z_1^{\Pi}(x)=z_1(x)$  is independent of the permutation $\Pi$. Analytic continuation then implies
that each of the roots $z_k^{\Pi}(x)=z_k(x)$ is independent of $\Pi$. This implies in turn that the algebraic equation
$f^{\Pi}(z,x)=f(z,x)=0$ is independent of $\Pi$ and therefore also $\Gamma_k^{\Pi}=\Gamma_k=\Delta_k$.

\begin{question} Given the intervals
$\Delta_1,\ldots,\Delta_p$, we have now obtained $p!$ block Toeplitz matrices $T^{\Pi}$, labeled by the permutations
$\Pi$ of $(1,\ldots,p)$, which all have the same algebraic equation $f(z,x)=0$, and which all satisfy
\eqref{GammaDelta}. Are there any other block Toeplitz matrices $T$ of the form \eqref{defH}--\eqref{blocks:Bminus1}
(with arbitrary entries $b_{i,j}$) leading to this same algebraic equation $f(z,x)=0$? (or equivalently, for which
$\Gamma_k=\Delta_k$ for all $k$, using the notations in \eqref{symbol:Toeplitz}--\eqref{defGammas}?)
\end{question}

We conjecture that the answer to the above question is negative.

\end{document}